\newcommand\widecheck[1]{%
\savestack{\tmpbox}{\stretchto{%
  \scaleto{%
    \scalerel*[\widthof{\ensuremath{#1}}]{\kern-.6pt\bigwedge\kern-.6pt}%
    {\rule[-\textheight/2]{1ex}{\textheight}}
  }{\textheight}%

}{0.5ex}}%
\stackon[1pt]{#1}{\scalebox{-1}{\tmpbox}}%
}
\theoremstyle{definition}
\newtheorem{defn}{Definition}[section]
\newtheorem{eg}[defn]{Example}
\theoremstyle{remark}
\newtheorem{rmk}[defn]{Remark}
\theoremstyle{plain}
\newtheorem{thm}[defn]{Theorem}
\newtheorem{lem}[defn]{Lemma}
\newtheorem{cor}[defn]{Corollary}
\DeclareMathOperator*{\spn}{span}
\DeclareMathOperator*{\conv}{conv}
\numberwithin{equation}{section}
\newcommand{\rmnum}[1]{\romannumeral #1}
\newcommand{\Rmnum}[1]{\expandafter\@slowromancap\romannumeral #1@}
\def\imod#1{\allowbreak\mkern10mu({\operator@font mod}\,\,#1)}
\patchcmd{\@settitle}{\uppercasenonmath\@title}{}{}{}
\patchcmd{\@setauthors}{\MakeUppercase}{}{}{}
\patchcmd{\section}{\scshape}{}{}{}
\keywords{ $M$-ideals, Compact operators, $M$-embedded spaces, Ball proximinality}
\subjclass[2010]{ Primary 47L05,41A50 secondary  47B07, 47A58, 46B20}
\begin{document}
\title{Ball proximinality of $M$-ideals of compact operators}
\author{C. R. Jayanarayanan}
\address{Department of Mathematics, Indian Institute of Technology Palakkad, 678557, India}
\email{crjayan@iitpkd.ac.in}
\author{Sreejith Siju}
\address{Department of Mathematics, Indian Institute of Technology Palakkad, 678557, India}
\email{sreejithsiju5@gmail.com}
\begin{abstract}
 In this article, we prove the proximinality of closed unit ball of $M$-ideals of compact operators. We also prove  the ball proximinality of $M$-embedded spaces in their biduals. Moreover, we  show that  $\mathcal{K}(\ell_1)$, the space  of compact operators on $\ell_1$, is ball proximinal  in  $\mathcal{B}(\ell_1)$, the space of bounded operators on $\ell_1$, even though $\mathcal{K}(\ell_1)$ is not an $M$-ideal  in $\mathcal{B}(\ell_1)$.
\end{abstract}
\maketitle
\section{Introduction}
One of the important problems which arises naturally in approximation theory is  the problem of existence of best approximation for a given vector from a specified subset of a Banach space. If this happens for every vector, then the subset is said to be proximinal. Clearly compact subsets of a Banach space are proximinal. However, lack of sufficient compact sets in infinite dimensional Banach spaces
makes the existence problem non-trivial. 
The present article studies the proximinality of closed unit ball of space of compact operators when it is an $M$-ideal in the space  of bounded linear operators.

Throughout this article,  $X$ will denote an infinite dimensional Banach space and  $H$ will denote an infinite dimensional Hilbert space. For a Banach space $X$; let $B_X$, $S_X$ and $B(x,r)$ denote the closed unit ball, the unit sphere and the closed ball with center at $x\in X$ and radius $r$ respectively. We consider every Banach space $X$, under the canonical embedding, as a subspace of $X^{**}$. For a Banach space $X$, let $\mathcal{B}(X)$ denote the space of all bounded linear operators on $X$, and $\mathcal{K}(X)$ denote the space of all compact operators on $X$. The essential norm $\|T\|_e$ of an operator $T\in \mathcal{B}(X)$ is the distance from $T$ to the space of compact operators, that is, $\|T\|_e = d(T, \mathcal{K}(X)) $.

We first recall some basic definitions and results which  will be needed later.    
\begin{defn}
	Let $K$ be a non-empty closed subset of a Banach space $X$. For $x \in X$, let  $P_K(x) = \{k\in K: d(x, K)= \|x-k\| \}$, where  $d(x, K)$ denotes the distance of $x$ from $K$.
	An element of $P_{K}(x)$ is called a best approximation to $x$ from $K .$ The set $K$ is said to be proximinal in $X$ if $P_{K}(x) \neq \phi$ for all $x \in X$. A closed subspace $Y$ of $X$ is said to be \emph{ball proximinal} in $X$ if  $B_Y$ is  proximinal in $X$ (see \cites{MR2374712} for details).
	\end{defn}

Clearly, ball proximinal subspaces are proximinal but the converse need not be true (see \cite{MR2374712}*{Proposition~2.4} and \cite{MR2146216}*{Theorem~1}).

In \cite{MR0493107}, Blatter raised the problem of identifying those Banach spaces $X$ which are proximinal in its bidual $X^{**}$, a problem which gained a lot of interest since the appearance of \cite{MR0493107}. In general, a Banach space need not be proximinal in its bidual (see \cites{MR0493107, MR0296659} for examples). In \cite{MR0296659}, Holmes and Kripke proved that for a Hilbert space $H$,  $\mathcal{K}(H)$ is proximinal in its bidual  $\mathcal{B}(H)$. They have also asked whether $\mathcal{K}(X)$ is proximinal in $\mathcal{B}(X)$ when $X$ is a Banach space. In general, $\mathcal{K}(X)$ need not be  proximinal in $\mathcal{B}(X)$. In the present paper, we discuss the ball proximinality of $\mathcal{K}(X)$ in $\mathcal{B}(X)$. Since  $\mathcal{K}(X)$ may not be even proximinal, we cannot expect ball proximinality of $\mathcal{K}(X)$ in general. So we discuss the ball proximinality of $\mathcal{K}(X)$ under some additional assumption on $X$. Towards this we recall the notion of an $M$-ideal  which is stronger than proximinality.

\begin{defn}[\cite{MR1238713}]
Let $X$ be a Banach space and $Y$ be a closed subspace of $X$. A linear projection $P$ on $X$ is said to be an \emph{$L$-projection} (\emph{$M$-projection}) if  $\|x\|=\|Px\|+\|x-Px\| \left(\|x\| = \operatorname{max}\{\|Px\|,\|x-Px\|\}\right)$ for all $x \in X$. A closed subspace $Y$ of $X$ is said to be an \emph{$M$-ideal}  in $X$ if annihilator  of $Y$ in $X^*$, denoted by $Y^\bot$, is the range of an $L$-projection in $X^*$.  If $X$ is an $M$-ideal in $X^{**}$, then $X$ is said to be an $M$-embedded space. 
\end{defn}
It is well known that $\mathcal{K}(H)$ is an $M$-ideal in $\mathcal{B}(H)$ and $\mathcal{K}(\ell_p)$ is an $M$-ideal in   $\mathcal{B}(\ell_p)$ for $1<p<\infty$ (see \cites{MR1238713}). Hence $\mathcal{K}(H)$ and $\mathcal{K}(\ell_p)$ ($1<p<\infty$) are proximinal in their respective biduals. By \cite{MR1238713}*{Chapter~\Rmnum{6}, Proposition~4.11}, we know that if $X$ is reflexive and $\mathcal{K}(X)$ is an $M$-ideal in $\mathcal{B}(X)$, then $\mathcal{K}(X)^{**}=\mathcal{B}(X)$ and hence $\mathcal{K}(X)$ is proximinal in its bidual. However, $M$-ideals need not be ball proximinal (see \cite{MR3314889}). So it is natural to ask whether an $M$-embedded space is ball proximinal in its bidual. In this article, we give an affirmative answer to this question and also prove the ball proximinality of $\mathcal{K}(X)$ when $\mathcal{K}(X)$ is an $ M$-ideal in $\mathcal{B}(X)$.

In Section~\ref{hilbert}, we give an operator theoretic proof of ball proximinality of $\mathcal{K}(H)$ in $\mathcal{B}(H)$. We also prove the distance formula, $d(T,B_{\mathcal{K}(H)}) = \operatorname{max}\left\{\|T\|-1, d(T, \mathcal{K}(H))\right\}$ for $T\in \mathcal{B}(H).$

Another approach  to study proximinality of space of compact operators is by using the following notion of basic inequality, introduced by Axler, Berg, Jewell, and Shields (see \cites{MR1971228, MR576869} for details). 
Recall that a net $(A_\alpha)$ in $\mathcal{B}(X)$ converges to $A\in \mathcal{B}(X)$ in strong operator topology (SOT) if $\|A_\alpha x-Ax\| \longrightarrow 0$ for every  $x\in X$, and  $(A_\alpha)$ converges to $A$ in weak operator topology (WOT) if $x^{*}(A_\alpha x)\longrightarrow 0$ for every $x\in X$ and $x^*\in X^*.$
\begin{defn}[\cite{MR576869}]
A Banach space $X$ is said to satisfy the basic inequality if for each $T \in \mathcal{B}(X)$ and each bounded net $\left(A_{\alpha}\right)$ in $\mathcal{K}(X)$ such that $A_{\alpha} \rightarrow 0$ in SOT
 and $A_{\alpha}^{*} \rightarrow 0$ in SOT, the following is true: for each $\varepsilon>0$, there exists an index $\beta$ such that
 \[
 \left\|T+A_{\beta}\right\| \leq \varepsilon+\max \left(\|T\|,\|T\|_{e}+\left\|A_\beta\right\|\right).
 \]	
 \end{defn}

 The basic inequality was originally defined in~\cite{MR1971228} where sequences were used instead of nets (see   Section~2 of \cite{MR576869} to see the consequence of replacing sequences by nets in the definition of basic inequality).

In Section~\ref{banach}, we prove the ball proximinality of $\mathcal{K}(X)$ in $\mathcal{B}(X)$ when $\mathcal{K}(X)$ is an $M$-ideal in $\mathcal{B}(X)$. The key idea for proving this result is the inequality mentioned in Lemma~\ref{2}. This inequality is basically a modification of the revised basic inequality discussed in \cite{MR1257062}. In fact,  inequality in Lemma~\ref{2} and revised basic inequality  in \cite{MR1257062} are modified versions of the basic inequality. We  also prove the ball proximinality of $M$-embedded spaces  in their  biduals. 
Moreover, we  show that  $\mathcal{K}(\ell_1)$ is ball proximinal  in $\mathcal{B}(\ell_1)$ even though $\mathcal{K}(\ell_1)$ is not an $M$-ideal  in $\mathcal{B}(\ell_1)$.

  \section{Ball proximinality of space of compact operators on Hilbert space.}
  \label{hilbert}
In this section, we prove the ball proximinality of $\mathcal{K}(H)$ in $\mathcal{B}(H)$ when $H$ is a Hilbert space. 

Let $ S(H)$ be the set of all  sequences of unit vectors in the Hilbert space $H$ which converge weakly to $0$. For $T\in \mathcal{B}(H)$, define  $\Delta(T)= \sup$ $\{\displaystyle{\limsup_{n\to\infty}\|Ts_n\|: (s_n)\in S(H) }\}$ as in \cite{MR0296659}. In \cite{MR0296659}*{Section 3,  Theorem}, it is proved that $ d(T,\mathcal{K}(H))=\Delta(T)$. The following theorem derives  an analogous distance formula for $B_{\mathcal{K}(H)}$ and establishes the ball proximinality of $\mathcal{K}(H)$ in $\mathcal{B}(H)$.
\begin{thm}\label{Hcase}
	Let $H$ be an infinite dimensional Hilbert space. Then 
	\begin{enumerate}
		\item $d(T,B_{\mathcal{K}(H)}) = \max \left\{\|T\|-1,\Delta(T)\right\}=\max \left\{\|T\|-1,d(T,\mathcal{K}(H))\right\}$ for  $T\in \mathcal{B}(H)$ and\label{dist}
		\item $\mathcal{K}(H)$ is ball proximinal in $\mathcal{B}(H)$.
	\end{enumerate}
\end{thm}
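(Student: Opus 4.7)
The plan is to establish (1) and (2) simultaneously by exhibiting, for each $T\in\mathcal{B}(H)$, an explicit best approximant from $B_{\mathcal{K}(H)}$. The lower bound $d(T, B_{\mathcal{K}(H)}) \ge \max(\|T\|-1,\Delta(T))$ is immediate from the triangle inequality: for any $K\in B_{\mathcal{K}(H)}$, one has $\|T-K\|\ge\|T\|-\|K\|\ge\|T\|-1$, and $\|T-K\|\ge d(T,\mathcal{K}(H))=\Delta(T)$ by the Holmes--Kripke identity recalled above.

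For the matching upper bound, I would use the polar decomposition $T=U|T|$ together with the Borel functional calculus for the positive operator $|T|$. Set $\alpha:=\|T\|_e$, which equals the maximum of the essential spectrum of $|T|$ by the $C^{\ast}$ identity in the Calkin algebra. Define the continuous function
\[
h(\lambda) := \min\bigl(1,\ \max(\lambda-\alpha,\,0)\bigr),\qquad \lambda\ge 0,
\]
and put $K:=U\,h(|T|)$. Then $\|K\|\le\|h\|_{\infty}=1$, and writing $f(\lambda):=\lambda-h(\lambda)$, a case analysis on $\lambda\in[0,\alpha]$, $\lambda\in(\alpha,\alpha+1]$, and $\lambda>\alpha+1$ gives $|f(\lambda)|\le\max(\alpha,\|T\|-1)$ on the spectrum of $|T|$, so $\|T-K\|\le\|f(|T|)\|\le\max(\alpha,\|T\|-1)$.

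The main obstacle is verifying that $K$ is compact, and this is where the specific shape of $h$ matters. The essential spectrum of $|T|$ lies in $[0,\alpha]$, on which $h$ vanishes; the remainder of the spectrum of $|T|$ consists of isolated eigenvalues $\lambda_i$ of finite multiplicity, accumulating at worst at $\alpha$. Since $h$ is continuous with $h(\alpha)=0$, the scalars $h(\lambda_i)$ tend to $0$, so $h(|T|)=\sum_i h(\lambda_i)E_i$ is a norm limit of finite-rank operators and hence compact; composing with the contraction $U$ preserves compactness. The naive choice $h(\lambda)=\min(\lambda,1)$ would fail precisely when eigenvalues accumulate at $\alpha>0$, as $h(\lambda_i)\to\alpha\ne 0$ would destroy compactness; subtracting $\alpha$ in the middle region restores it at the price of the error $\alpha$ there, which is exactly the slack permitted by (1). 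Combining the two inequalities yields equality in (1) and exhibits $K$ as a best approximant from $B_{\mathcal{K}(H)}$, proving (2).
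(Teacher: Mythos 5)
Your argument is correct, but it takes a genuinely different route from the paper. The paper follows Holmes--Kripke directly: after disposing of the cases where $T$ is compact, where $\|T\|\le 1$, and where $T$ does not attain its norm, it iteratively selects unit vectors $e_{n+1}$ at which $TP_n$ attains its norm ($P_n$ the orthogonal projection onto $\{e_1,\dots,e_n\}^{\perp}$) and takes the approximant to be an explicit weighted sum of the rank-one operators $\frac{Te_n}{\|Te_n\|}\otimes e_n$, with weights $\|Te_n\|/\|T\|$ for the first few terms and $\|Te_n\|-\Delta(T)$ thereafter; the verification is a chain of orthogonality computations split into subcases according to whether the sequence $(e_n)$ terminates. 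You compress all of this into the single formula $K=U\,h(|T|)$ with $h(\lambda)=\min(1,\max(\lambda-\alpha,0))$ and $\alpha=\|T\|_e$, and the three steps you isolate are the right ones: $\|h\|_{\infty}\le 1$ puts $K$ in the unit ball once compactness is known, the pointwise bound $0\le\lambda-h(\lambda)\le\max(\alpha,\|T\|-1)$ on $\sigma(|T|)\subseteq[0,\|T\|]$ gives the upper estimate, and compactness of $h(|T|)$ holds because $h$ is continuous and vanishes on $\sigma_{\mathrm{ess}}(|T|)\subseteq[0,\alpha]$ while the rest of $\sigma(|T|)$ is discrete spectrum accumulating only at $\alpha$. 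What your route buys is uniformity (one formula handles $\|T\|\le 1$, non-norm-attaining $T$, and compact $T$ alike) and transparency: the approximant is visibly a truncation of the singular values into $[0,1]$ at a cost of at most $\|T\|_e$. What the paper's route buys is that it uses only the elementary lemmas Holmes and Kripke already established (no Calkin algebra, no Weyl decomposition of the spectrum), and its sequence-of-compact-operators template is the one that survives into the Banach space setting of Section~3, where no functional calculus is available. If you write your version up, make two points explicit: that $\|T\|_e=\||T|\|_e$ (via $\pi(T)^{*}\pi(T)=\pi(|T|)^{2}$ in the Calkin algebra), so $\alpha$ really is the top of $\sigma_{\mathrm{ess}}(|T|)$; and that for each $\varepsilon>0$ only finitely many of the isolated eigenvalues exceed $\alpha+\varepsilon$, which is what upgrades the pointwise decay $h(\lambda_i)\to 0$ to norm convergence of $\sum_i h(\lambda_i)E_i$.
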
 
  \begin{proof} 
Let $T\in \mathcal{B}(H)$. Since, by \cite{MR0296659}, $d(T,\mathcal{K}(H))=\Delta(T)$; it follows that $\max\left\{\|T\|-1,\Delta(T)\right\} \leq d(T,B_{\mathcal{K}(H)})$. 
Without loss of generality, we may assume that $T$ is not a compact operator. 
For, if $T$ is a compact operator then $\frac{T}{\|T\|}$ is a best approximation to $T$ from $B_{\mathcal{K}(H)}$.

\noindent {\bf Case 1}: $\|T\|>1$.

Suppose $T$ does not attain its norm (that is, $\|T\|\ne\|Tx\|$ for every $x\in S_H$). Let $(s_n)$ be a sequence in $S_H$  such that $\lim_{n\rightarrow\infty} \|Ts_n\|= \|T\|$. Then, by \cite{MR0296659}*{Lemma 2}, $(s_n)\in S(H)$. Hence $\|T\| \leq \Delta(T) \leq d(T,B_{\mathcal{K}(H)})$. Thus $0$ is a best approximation to $T$ from $B_{\mathcal{K}(H)}$.

Now let $T \in \mathcal{B}(H)$ be a norm attaining operator (that is, there exists an $x\in S_H$ such that $\|T\|=\|Tx\|$). We follow  a technique similar to the one  used in Case~3 of \cite{MR0296659}*{Section 3,  Theorem} to construct an orthonormal sequence (finite or infinite) which will be used to define the best compact approximant from the closed unit ball of $\mathcal{K}(H)$.

Let $e_1 \in H $ be a unit vector such that $\|T(e_1)\| = \|T\|$ and $P_1$ be the projection onto $\{e_1\}^\bot$. 
Having chosen the orthonormal set $\{e_1, e_2, ..., e_n\}$, let $P_n$ be the orthogonal projection onto $\{e_1, e_2, ..., e_n\}^\bot=E_n$. Let $E_0=H$ and $P_0$ be the identity operator on $H$. Since $T$ is not compact and $I-P_n$ is a finite rank projection, $\|TP_n\|\neq0$. If $TP_n$ attains its norm, then choose a unit vector $e_{n+1}$  such that $\|TP_n(e_{n+1})\|= \|TP_n\|$. If $TP_n$ does not attain its norm, then we stop the process of choosing unit vectors and proceed with the finite set $\{e_1,\ldots,e_n\}$ to obtain a best approximation as we shall see  in  Subcase~1 below. On the other hand, if $TP_n$ attains its norm at $e_{n+1}$ for all $n$, then we proceed with the infinite set $\{e_1,e_2,\ldots\}$ as we shall see  in Subcase~2 below.

\noindent {\bf Subcase 1:} Suppose that $P_m$ exists only for finitely many $m$. That is, there exists an $n\ge 1$ such that $TP_{k-1}$  attains its norm  at $e_k$ for $k = 1,2,...,n$ and $TP_n$ does not attain its norm.

In this case, by a similar argument as in the proof of  Case~3 of \cite{MR0296659}*{Section 3,  Theorem}, we get that
$P_{k-1}\left(e_{k}\right)=e_{k}$,\, $e_{k} \in E_{k-1}$ and 
$\langle T(e_k),TP_k(x)\rangle = 0$ for all $x\in H$ and $k = 1,2,...,n$.

In particular, for  $1\leq i< j\leq n$, $\langle T(e_i), T(e_j)\rangle = \langle T(e_i), TP_i(e_j)\rangle=0$  and 
$\langle T(e_k), TP_n(x)\rangle = \langle T(e_k), TP_k(P_n(x))\rangle = 0  \mbox{ for } k = 1,2,..., n.$

Now define an operator $L: H\to H$ as
$$L= \frac{1}{\|T\|}\sum_{i=1}^n Te_i\otimes e_i, $$
where $(Te_i\otimes e_i)(x) =  \langle x,e_i\rangle Te_i$.  

Since $\langle Lx, Lx\rangle = \frac{1}{\|T\|^2}\sum_{i=1}^n |\langle x,e_i \rangle |^2\|Te_i\|^2\leq \|x\|^2$ for every $x\in H$, we have $\|L\|\leq 1$.

Let $x\in B_H$. Write $ x= u+v$, where $u\in \spn\{e_1,e_2,..., e_n\}$ and $v\in $ $\{e_1,e_2,..., e_n\}^\bot$. Then $Lv=0$ and 
\begin{align*}
\langle (T-L)u, Tv\rangle & = \left\langle(T-L)(\textstyle{\sum_{i=1}^{n}\langle u, e_i\rangle e_i )}, TP_nv\right\rangle\\
&= \left\langle \textstyle{\sum_{i=1}^{n}(1 - \frac{1}{\|T\|})\langle u, e_i\rangle Te_i},  TP_nv\right\rangle\\
&=\left(1- \frac{1}{\|T\|}\right) \textstyle{\sum_{i=1}^{n}\langle u, e_i\rangle \langle Te_i}, TP_nv\rangle = 0.
\end{align*}
 In addition, by Case~2 of \cite{MR0296659}*{Section 3,  Theorem}, we have $\|TP_n\| = \Delta(T)$. Therefore,
\begin{align*}
\|(T-L)(x)\|^2 &=\langle (T-L)(u+v), (T-L)(u+v)\rangle\\
& =   {\langle (T-L)u, (T-L)u\rangle+  \langle Tv, Tv \rangle }\\
& =  \left\langle \sum_{i=1}^{n}\left(\frac{\|T\|-1}{\|T\|}\right)\langle u, e_i\rangle Te_i, \sum_{i=1}^{n}\left(\frac{\|T\|-1}{\|T\|}\right)\langle u, e_i\rangle Te_i\right\rangle + \langle TP_nv, TP_nv \rangle \\
&=\left(\frac{\|T\|-1}{\|T\|}\right)^2\sum_{i=1}^n|\langle u, e_i \rangle|^2 \|Te_i\|^2 + \|TP_nv\|^2\\
&\leq \left(\|T\|-1\right)^2\|u\|^2 + \Delta(T)^2\|v\|^2\\
&\leq \max \{(\|T\|-1)^2, \Delta(T)^2\}\|x\|^2.
\end{align*}
Thus $\max\left\{\|T\|-1,\Delta(T)\right\} \leq  d(T,B_{\mathcal{K}(H)}) \leq \|T-L\|\leq$  $\max\left\{\|T\|-1,\Delta(T)\right\}$. So $d(T,B_{\mathcal{K}(H)}) = \|T-L\|$. Hence $L$ is a best approximation to $T$ from $B_{\mathcal{K}(H)}$.

\noindent{\bf Subcase 2:}
Suppose $P_n$ exists for all $n\in \mathbb{N}$. That is, $TP_n$ attains its norm at $e_{n+1}$ for all $n\in \mathbb{N}$.


In this case, by a similar argument as in Case~3 of \cite{MR0296659}*{Section 3,  Theorem},  we can see that the  orthonormal sequence of vectors $(e_n)_{n=1}^\infty$ satisfies the following properties.
\begin{align}
P_n(e_{n+1}) &=e_{n+1} \mbox{ and hence } \|TP_n\|=\|Te_{n+1}\|  \mbox{ for all } n\in\mathbb{N},\label{ineq1}\\
\left\langle T(e_n),TP_n(x)\right\rangle &= 0 \hspace{6pt}\textnormal{for all  $n\in \mathbb{N}$ and  for any $x\in H$},\label{inq2}\\
\langle T(e_i), T(e_j)\rangle &= 0 \hspace{10pt} \textnormal{for all $i\neq j$,} \\\label{inq3}
\lim_{n\rightarrow\infty}\|Te_n\| &= \Delta(T) \hspace{10pt} \textnormal{and}\\
\Delta(T)&\leq \|Te_n\| \mbox{ for all } n.\label{inq5}
\end{align}

Since $T$ is not compact, $\Delta(T) >0 $ and hence, by (\ref{inq5}), $\|Te_n\| > 0$ for all $n$. Now define an operator $L: H \longrightarrow H$ as 
\begin{equation*}
L = \frac{1}{\|T\|}{\sum_{n=1}^K Te_n\otimes {e_n}}+\sum_{n=K+1}^{\infty}{\left(\|Te_n\|-\Delta(T)\right)}\frac{Te_n}{{\|Te_n\|}}\otimes {e_n},
\end{equation*}
where $K$ is chosen so that $\left\|Te_n\right\|-\Delta(T) \leq 1$ for all $n\geq K$. 
Since the sequence $\left(\|Te_n\|-\Delta(T)\right)$ converges to 0 and $(\frac{Te_n}{\|Te_n\|})$ forms an orthonormal sequence, by \cite{MR0257800}*{Page 8, Theorem 1} and  \cite{MR0257800}*{Page 13, Corollary}, $L$ is a compact operator with $\|L\|\leq1$. 

Let $x\in B_H$. Write $x=u+v$, where $u\in \spn\{e_1,e_2,...,\}$ and $v\in \{e_1,e_2,....\}^\bot$.  
Define
\begin{equation*}
\lambda_n=
\begin{cases}
\frac{\|T\|-1}{\|T\|}\langle u, e_n \rangle & \text{if $n\leq K$},\\
\frac{\Delta(T)}{\|Te_n\|}\langle u,e_n \rangle & \text{if $n> K$}. 
\end{cases}
\end{equation*}
Then $(T-L)u=(T-L)(\sum_{n=1}^{\infty}\langle u,e_n \rangle e_n)=\sum_{n=1}^{\infty}\lambda_nTe_n$. Since $v\in \{e_1,e_2,\ldots\}^\bot$,  $P_nv = v$ for all $n\in \mathbb{N}$. 
Now, using (\ref{inq2}), we get 
 \begin{align}
 \label{computation}
 \left\langle Tv,(T-L)u\right\rangle 
 =\left\langle Tv, \sum_{n=1}^{\infty}\lambda_nTe_n\right\rangle
 =\sum_{n=1}^{\infty}\left\langle Tv, \lambda_nTe_n\right\rangle
 = \sum_{n=1}^{\infty}\left\langle TP_nv,\lambda_nTe_n\right\rangle
 =0.
  \end{align} 
Since
 $\|Tv\|^2 =\|TP_nv\|^2
 \leq \|TP_n\|^2\|v\|^2
 \leq \|Te_{n+1}\|^2\|v\|^2$ for every $n$, by (\ref{inq3}), we get 
 $\|Tv\|^2\leq \Delta(T)^2\|v\|^2$. 
 This together with (\ref{computation}) and using the fact that $Lv = 0$,  we get
 \begin{align*}
 \|(T-L)x\|^2&=\left\langle (T-L)u,(T-L)u\right\rangle + 2Re\left\langle Tv,(T-L)u\right\rangle+\left\langle Tv, Tv\right\rangle\\
 &= \left\langle\sum_{n=1}^K \lambda_{n} Te_n +\sum_{n=K+1}^\infty \lambda_nTe_n,\sum_{n=1}^K \lambda_n Te_n+\sum_{n=K+1}^\infty \lambda_n Te_n \right\rangle + \|Tv\|^2\\
 &\leq   \left\langle\sum_{n=1}^K \lambda_{n} Te_n, \sum_{n=1}^K \lambda_{n} Te_n\right\rangle +\left\langle\ \sum_{n=K+1}^\infty \lambda_n Te_n, \sum_{n=K+1}^\infty \lambda_n Te_n\right\rangle  +  \Delta(T)^2\|v\|^2\\
 &\leq\left({\|T\|-1}\right)^2\sum_{n=1}^K| \langle u,e_n\rangle|^2+\sum_{n=K+1}^\infty \Delta(T)^2|\langle u,e_n\rangle|^2+\Delta(T)^2\|v\|^2\\
 &\leq  \max\left\{(\|T\|-1)^2,\Delta(T)^2\right\}\|x\|^2.
 \end{align*}

Thus $d(T,B_{\mathcal{K}(H)})=\|T-L\|=\max\left\{\|T\|-1, \Delta(T)\right\}$. Hence $L$ is a best approximation to $T$ from $B_{\mathcal{K}(H)}$.

\noindent{\bf Case~2:} $\|T\| \leq 1$. 

Since $T$ is not compact, by \cite{MR0296659}*{Section 3, Theorem}, we can choose an element $L\in P_{\mathcal{K}(H)}(T)$ such that $L$ is either of the form   $L=\sum_{n=1}^{\infty}\left(\left\|T\left(e_{n}\right)\right\|-\Delta(T)\right) \frac{Te_n}{{\|Te_n\|}}\otimes {e}_{n}$ for some orthonormal sequence $(e_n)$ with $\|T(e_n)\|>0$ for all $n$ or $L=TP$ for some finite rank  orthogonal projection $P$ on $H$. Since $\|T\|\leq 1$, by \cite{MR0257800}*{Page~8, Theorem~1}, it follows that  $\|L\|\leq 1$. Then $ d(T,B_{\mathcal{K}(H)})\leq \|T-L\|= d(T,\mathcal{K}(H))\leq d(T,B_{\mathcal{K}(H)})$. Thus $d(T,B_{\mathcal{K}(H)})=\max\left\{\|T\|-1, \Delta(T)\right\}=\|T-L\|$. Thus $L$ is a best approximation to $T$ from $B_{\mathcal{K}(H)}$ and hence  $\mathcal{K}(H)$ is ball proximinal in $\mathcal{B}(H)$.
\end{proof}
We now prove that the distance  $d(T, B_{\mathcal{K}(H)})$ coincides with $d(T, \mathcal{K}(H))$ when $T$ is a scalar multiple of an extreme point of closed unit ball of $\mathcal{B}(H)$.  Towards this, we describe $P_{B_X}(x)$ when $x$ is a scalar multiple of an extreme point of the closed unit ball of a Banach space $X$.

\begin{thm}\label{isometry}
	Let $e$ be an extreme point of the closed unit ball of a Banach space $X$. Then $P_{B_X}(\alpha e) = \{\frac{\alpha e}{|\alpha|} \}$ for all $\alpha \in \mathbb{C}$ with $|\alpha| >1$.
\end{thm}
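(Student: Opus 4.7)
The plan is to prove the distance formula and the uniqueness of the best approximation separately, with the natural candidate being $y_0 := \frac{\alpha e}{|\alpha|}$. Note that $y_0$ is a scalar multiple of $e$ by the unit modulus scalar $\frac{\alpha}{|\alpha|}$, so $y_0 \in S_X$ and, as I will use below, $y_0$ is itself an extreme point of $B_X$.

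First I would establish $d(\alpha e, B_X) = |\alpha|-1$. The lower bound is just the triangle inequality: for any $y \in B_X$,
\[
\|\alpha e - y\| \;\ge\; \|\alpha e\| - \|y\| \;\ge\; |\alpha| - 1.
\]
The matching upper bound is realized by $y_0$, since $\|\alpha e - y_0\| = |\alpha|\bigl(1 - \tfrac{1}{|\alpha|}\bigr) = |\alpha|-1$. Hence $y_0 \in P_{B_X}(\alpha e)$ automatically.

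For uniqueness, I would take any $y \in B_X$ with $\|\alpha e - y\| = |\alpha|-1$ and squeeze two facts out of the equality case of the triangle inequality: the chain $|\alpha| \le \|\alpha e - y\| + \|y\| \le (|\alpha|-1) + 1$ must collapse, giving $\|y\| = 1$, and consequently $\frac{\alpha e - y}{|\alpha|-1}$ also has norm $1$. The key identity is then
\[
y_0 \;=\; \frac{1}{|\alpha|}\, y \;+\; \frac{|\alpha|-1}{|\alpha|}\cdot \frac{\alpha e - y}{|\alpha|-1},
\]
which expresses $y_0$ as a convex combination of two elements of $B_X$ with strictly positive coefficients (using $|\alpha|>1$). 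Applying extremality of $y_0$ forces both summands to equal $y_0$, and in particular $y = y_0$, finishing the proof.

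The only small thing needing justification is that $y_0 = \frac{\alpha}{|\alpha|}\,e$ is an extreme point whenever $e$ is, but this is immediate: any convex decomposition $y_0 = \lambda u + (1-\lambda)v$ in $B_X$ can be multiplied through by $\frac{|\alpha|}{\alpha}$ (a unit modulus scalar, so it preserves $B_X$) to yield a convex decomposition of $e$, and extremality of $e$ finishes the verification. I do not foresee any substantive obstacle beyond this.
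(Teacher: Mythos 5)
Your proof is correct, and it takes a more direct route than the paper's. The paper first reduces to real $\alpha>1$, observes that $e$ is an extreme point of the shifted ball $B(\alpha e,\alpha-1)$, and then, given a competitor $f$, constructs the auxiliary point $y=(2-t)e-(1-t)f$ (with $t$ chosen so that $\alpha+t>2$), checks that $y$ lies in $B(\alpha e,\alpha-1)$, and uses the midpoint identity $\frac{y+te+(1-t)f}{2}=e$ to force $f=e$; the complex case is then handled by rotating to the extreme point $\frac{\alpha e}{|\alpha|}$. You instead treat all complex $\alpha$ at once and exhibit $y_0=\frac{\alpha e}{|\alpha|}$ directly as the proper convex combination $y_0=\frac{1}{|\alpha|}y+\frac{|\alpha|-1}{|\alpha|}\cdot\frac{\alpha e-y}{|\alpha|-1}$ of two elements of $B_X$, so extremality of $y_0$ immediately gives $y=y_0$. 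The two arguments rest on the same two ingredients (the triangle-inequality lower bound $d(\alpha e,B_X)\ge|\alpha|-1$ and extremality under unimodular rotation), but your decomposition eliminates the auxiliary point and the membership check, and it also makes explicit the existence half of the claim --- that $d(\alpha e,B_X)=|\alpha|-1$ is attained at $y_0$ --- which the paper leaves implicit. The one cosmetic slip is the remark that $\left\|\frac{\alpha e-y}{|\alpha|-1}\right\|=1$ is a ``consequence'' of the collapsed triangle inequality: it follows directly from the hypothesis $\|\alpha e-y\|=|\alpha|-1$, and indeed the fact $\|y\|=1$ is not actually needed for your convex-combination step. This does not affect correctness.
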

\begin{proof}
 Assume first that $\alpha \in \mathbb{R}$ and  $\alpha >1$. Let $e$ be an extreme point of $B_X$. Then $-e$ is an extreme point of $B_X$. 
So $(1-\alpha)e$ is an extreme point of $B(0, \alpha -1)$. 
By shifting the center to $\alpha e$ we get that $e$ is an extreme point of $ B(\alpha e, \alpha -1).$    
Now suppose that there exists an $f\in B_X$ such that $\|\alpha e -f\|= \alpha -1=d(\alpha e, B_X )$.

Since $e, f \in  B(\alpha e, \alpha -1)$, we have $te+(1-t)f\in B(\alpha e, \alpha -1)$ for all $t\in [0, 1]$. 
Let $y= (2-t)e-(1-t)f$, where $t\in (0,1)$ is chosen so that $\alpha + t >2$. Then 
$$
\|\alpha e - ((2-t)e-(1-t)f)\|\leq |\alpha - (2-t)|+|1-t|
\leq |\alpha +t-2|+1-t
\leq \alpha - 1. 
$$
Hence $y\in B(\alpha e, \alpha -1)$. Moreover,  $\frac{y+te+(1-t)f}{2} = \frac{(2-t)e-(1-t)f+te+(1-t)f}{2} = \frac{2e}{2}= e$. 
Since $e$ is an extreme point of $B(\alpha e, \alpha -1)$, we must have $y =te+(1-t)f$. That is, $(2-t)e-(1-t)f = te+(1-t)f.$ 
Hence $e = f.$    

For $|\alpha| > 1$, if we let $f=\frac{\alpha e}{|\alpha|}$, then $f$ is again an extreme point of $B_X$. 
Hence by the first part of the proof we get  $P_{B_X}(\alpha e) = P_{B_X}(|\alpha|f)= \{f\}= \{\frac{\alpha e}{|\alpha|}\}.$
\end{proof}
 For a Hilbert space $H$, it is well known that the extreme points of the closed unit ball of $\mathcal{B}(H)$ are precisely the isometries and co-isometries. Now the distance formula  (\ref{dist}) in Theorem~\ref{Hcase} together with  Theorem~\ref{isometry} gives the following result.
\begin{cor}\label{isocor}
	Let $H$ be an infinite dimensional Hilbert space and $V\in \mathcal{B}(H)$ be an isometry or a co-isometry. Then for each $a\in \mathbb{C}$, $d(aV,B_{\mathcal{K}(H)})=d(aV, \mathcal{K}(H))$. 
\end{cor}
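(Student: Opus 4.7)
The plan is to deduce Corollary~\ref{isocor} from the distance formula of Theorem~\ref{Hcase} together with a single essential-norm computation. The formula gives
\[
d(aV, B_{\mathcal{K}(H)}) = \max\bigl\{|a|-1,\, d(aV,\mathcal{K}(H))\bigr\},
\]
so the claimed equality $d(aV, B_{\mathcal{K}(H)}) = d(aV,\mathcal{K}(H))$ is equivalent to the inequality $d(aV,\mathcal{K}(H)) \geq |a|-1$ for every $a \in \mathbb{C}$. Because $\mathcal{K}(H)$ is a linear subspace, $d(aV,\mathcal{K}(H)) = |a|\,d(V,\mathcal{K}(H))$ for $a\neq 0$ (and both sides vanish at $a=0$), so the entire argument reduces to the single fact $d(V,\mathcal{K}(H)) = \|V\|_e = 1$.

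To establish $\|V\|_e = 1$, I would split into two cases. If $V$ is an isometry, pick any orthonormal sequence $(s_n) \subset S_H$; then $s_n \rightharpoonup 0$ weakly in $H$ while $\|V s_n\| = \|s_n\| = 1$, hence $\Delta(V) \geq 1$. Combined with the trivial bound $\Delta(V) \leq \|V\| = 1$, this gives $\Delta(V) = 1$, and the Holmes--Kripke identity $d(T,\mathcal{K}(H)) = \Delta(T)$ recalled just before Theorem~\ref{Hcase} yields $\|V\|_e = 1$. If instead $V$ is a co-isometry, then $V^*$ is an isometry; since the involution $T \mapsto T^*$ is an isometric bijection of $\mathcal{K}(H)$ onto itself on a Hilbert space, $\|V\|_e = \|V^*\|_e$, and the isometry case applies.

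With $\|V\|_e = 1$ in hand, $d(aV,\mathcal{K}(H)) = |a| \geq |a|-1$ holds trivially for every $a \in \mathbb{C}$, so the maximum on the right-hand side of the distance formula collapses to $d(aV,\mathcal{K}(H))$, yielding the desired equality. The role of Theorem~\ref{isometry} is, via the well-known fact that extreme points of $B_{\mathcal{B}(H)}$ are exactly the isometries and co-isometries, to justify restricting attention to this class in the first place. I do not expect any substantial obstacle; the only point requiring a moment of care is the reduction of the co-isometry case to the isometry case, which the adjoint trick handles routinely.
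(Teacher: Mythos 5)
Your proof is correct, but it takes a genuinely different route from the paper's. You compute the essential norm directly: for an isometry $V$ you exhibit an orthonormal sequence $(s_n)$, which lies in $S(H)$ and satisfies $\|Vs_n\|=1$, so $\Delta(V)=1$ and hence $\|V\|_e=d(V,\mathcal{K}(H))=1$ by the Holmes--Kripke identity; the co-isometry case follows by the adjoint trick, and then $d(aV,\mathcal{K}(H))=|a|\ge |a|-1$ collapses the maximum in the distance formula of Theorem~\ref{Hcase}. The paper instead argues by contradiction using Theorem~\ref{isometry} in an essential way: if the two distances differed, the max formula would force $d(aV,B_{\mathcal{K}(H)})=|a|-1=d(aV,B_{\mathcal{B}(H)})$, so a best approximant $K\in B_{\mathcal{K}(H)}$ (which exists by ball proximinality) would also be a best approximant from $B_{\mathcal{B}(H)}$; since $V$ is an extreme point of $B_{\mathcal{B}(H)}$, Theorem~\ref{isometry} forces $K=\frac{aV}{|a|}$, which is not compact. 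So Theorem~\ref{isometry} is not merely there to ``justify restricting attention'' to isometries and co-isometries, as you suggest --- it is the engine of the paper's argument, and your proof dispenses with it entirely. Your approach is more elementary and yields the extra information that $d(aV,\mathcal{K}(H))=|a|$ (i.e., $0$ is a best compact approximant); the paper's approach is more structural and would apply verbatim to any non-compact extreme point of the unit ball for which a ball-proximinality-plus-distance-formula statement is available, without needing to evaluate the essential norm.
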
 
\begin{proof}
If $|a|\leq 1$, then by Theorem \ref{Hcase}, $d(aV,B_{\mathcal{K}(H)})=d(aV, \mathcal{K}(H))$. Now suppose that $|a|> 1$ and  $d(aV,B_{\mathcal{K}(H)})\ne d(aV, \mathcal{K}(H))$. Then, by Theorem~\ref{Hcase}, there exists a $K\in B_{\mathcal{K}(H)}$ such that 
 $d(aV, B_{\mathcal{K}(H)})=\|aV-K\|=|a|-1=d(aV, B_{\mathcal{B}(H)})$. Hence, by Theorem~\ref{isometry}, $K=\frac{aV}{|a|}$, which is a contradiction since $V$ is not a compact operator. 
\end{proof}

\section{Ball proximinality of M-ideals of compact operators and M-embedded spaces.}
\label{banach}
In this section,  we prove the ball proximinality of $\mathcal{K}(X)$ in $\mathcal{B}(X)$ when $X$ is a Banach space such that $\mathcal{K}(X)$ is an $M$-ideal in $\mathcal{B}(X)$. In this direction, we begin  with a lemma which is a modification of  \cite{MR1257062}*{Proposition~2.3}. 

Note that if $Y$ is an $M$-ideal in $X$, then every $y^*\in Y^*$ has a unique norm preserving extension to a functional $x^*\in X^*$. Thus we can consider $Y^*$ as a subspace of $X^*$. So it make sense to define  the weak topology on $X$ induced by $Y^*$. We will denote this by $\sigma(X,Y^*)$.
\begin{lem}\label{lemmma}
	Let $J$ be an $M$-ideal in a Banach space $X$ and $x\in X$. Then there exists a net $(y_\alpha)$ in $J$ such that $(y_\alpha)$ converges to $\frac{x}{\|x\|}$ in the $\sigma(X,J^*)$-topology, and for  each $z\in X$ and $\varepsilon>0$ there exists an index $\alpha_0$ such that
		\begin{equation}\label{genineq}
	\|z+\beta(x-y_{\alpha})\|\leq  \varepsilon + \max\{\|z+\frac{\|x\|-1}{\|x\|}\beta x\|, \|z+J\|+\beta\|x+J\|\}\mbox{ for every }\alpha\geq\alpha_0 \mbox{ and }   \beta\in [0,1].
	\end{equation}
	Consequently, 
	\begin{equation*}
	\limsup_\alpha\|z+\beta(x-y_\alpha)\|\leq  \max\{\|z+\frac{\|x\|-1}{\|x\|}\beta x\|, \|z+J\|+\beta\|x+J\|\}\mbox{ for every } z\in X \mbox{ and } \beta\in [0,1].
	\end{equation*}
\end{lem}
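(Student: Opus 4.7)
The plan is to adapt the proof of~\cite{MR1257062}*{Proposition~2.3} (the revised basic inequality for $M$-ideals) to accommodate the rescaled target $x/\|x\|$ and the parameter $\beta \in [0,1]$.

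I would first exploit the $M$-ideal decomposition of the bidual: since $J^\perp$ is the range of an $L$-projection $P$ on $X^*$, the adjoint $P^*$ is an $M$-projection on $X^{**}$, yielding $X^{**} = J^{**} \oplus_\infty J^s$, where $J^{**} = \ker P^*$ (canonically) and $J^s := P^*(X^{**})$. For $w \in X$, write $w = j(w) + s(w)$ with $j(w) \in J^{**}$ and $s(w) \in J^s$; then $\|w\| = \max(\|j(w)\|,\|s(w)\|)$ and $\|s(w)\| = \|w+J\|$. Since $J^* \subset X^*$ sits inside $\ker P$ (elements of $J^*$ have unique norm-preserving extensions lying in $\ker P$), every functional in $J^*$ annihilates $J^s$, so $j^*(w) = j^*(j(w))$ for all $w \in X$. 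Setting $u := j(x)/\|x\| \in B_{J^{**}}$, any net in $J$ converging weak-$*$ to $u$ inside $J^{**}$ therefore converges to $x/\|x\|$ in the $\sigma(X,J^*)$-topology.

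The net itself would be indexed by triples $\alpha = (F, G, \delta)$ with $F \subset J^*$ finite, $G \subset X$ finite, and $\delta > 0$, ordered by refinement. For each such $\alpha$, combining Goldstine's theorem with the $3$-ball property of the $M$-ideal $J$, I would select $y_\alpha \in J$ satisfying: (i) $\|y_\alpha\| \leq 1 + \delta$; (ii) $|j^*(y_\alpha - x/\|x\|)| \leq \delta$ for all $j^* \in F$; and (iii) $\|j(z) + \beta j(x) - \beta y_\alpha\|_{J^{**}} \leq \|z + \tfrac{\|x\|-1}{\|x\|}\beta x\| + \delta$ for each $z \in G$ and each $\beta$ in a fixed $\delta$-net of $[0,1]$.

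To assemble the inequality, decomposing in $X^{**}$ gives
\[
z + \beta(x - y_\alpha) = \bigl(j(z) + \beta j(x) - \beta y_\alpha\bigr) + \bigl(s(z) + \beta s(x)\bigr),
\]
and the $M$-sum structure yields
\[
\|z + \beta(x - y_\alpha)\| = \max\bigl(\|j(z) + \beta j(x) - \beta y_\alpha\|_{J^{**}},\; \|s(z) + \beta s(x)\|_{J^s}\bigr).
\]
The second term is bounded by $\|z+J\| + \beta\|x+J\|$ via the triangle inequality. Since $j(z) + \beta j(x) - \beta u = j\bigl(z + \tfrac{\|x\|-1}{\|x\|}\beta x\bigr)$ has norm at most $\|z + \tfrac{\|x\|-1}{\|x\|}\beta x\|$, condition~(iii) controls the first term by the same quantity plus $\delta$. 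Choosing $\alpha_0$ with $z \in G_{\alpha_0}$ and $\delta_{\alpha_0} < \varepsilon$, together with a fine enough $\delta$-net of $[0,1]$ (using joint continuity in $\beta$), yields~\eqref{genineq}; the $\limsup$ consequence then follows by letting $\varepsilon \to 0$.

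The main obstacle is the simultaneous realizability of conditions~(i)--(iii): producing one $y_\alpha \in J$ that both approximates $u$ weakly on a finite set of $J^*$-functionals \emph{and} realizes a finite family of norm inequalities in $J^{**}$. This is precisely where the $M$-ideal property is essential — the $3$-ball (equivalently, $n$-ball) property supplies this flexibility, while it would fail for a general proximinal subspace. The detailed verification parallels~\cite{MR1257062}*{Proposition~2.3}, augmented by a routine uniform-approximation argument over the compact parameter $\beta \in [0,1]$.
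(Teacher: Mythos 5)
Your overall architecture coincides with the paper's: the $M$-projection decomposition of $X^{**}$, the pointwise estimate $\|j(z)+\beta j(x)-\beta u\|=\|j(z+\tfrac{\|x\|-1}{\|x\|}\beta x)\|\le\|z+\tfrac{\|x\|-1}{\|x\|}\beta x\|$ paired with $\|s(z)+\beta s(x)\|\le\|z+J\|+\beta\|x+J\|$, and a net indexed by finite data. The gap is exactly in the step you flag as the main obstacle: producing one $y_\alpha\in J$ satisfying (ii) and (iii) simultaneously, and neither tool you invoke delivers (iii). Goldstine gives a net in $B_J$ converging weak$^*$ to $u$, but the norm is only weak$^*$ \emph{lower} semicontinuous, so this yields $\liminf_\alpha\|w-\beta y_\alpha\|\ge\|w-\beta u\|$ --- the wrong direction --- and since $u\in J^{\perp\perp}\setminus J$ in general, you cannot repair this by passing to convex combinations via Mazur. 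The $3$-ball ($n$-ball) property concerns finite families of balls centered at points of $X$, whereas the element $j(z)+\beta j(x)$ appearing in your condition (iii) lives in $J^{\perp\perp}$ and is generally not in $X$. One can reformulate the norm requirements upstairs as ball conditions in $X$ (the balls $B\bigl(\tfrac{z+\beta x}{\beta},\tfrac{1}{\beta}r_{z,\beta}\bigr)$ all contain $\tfrac{x}{\|x\|}$ and each meets $J$, so the $n$-ball property produces a point of $J$ in their common $\varepsilon$-enlargement), but that argument gives no control on $j^*(y_\alpha)$ for $j^*\in F$, so the $\sigma(X,J^*)$-convergence of the \emph{same} net --- which is essential when Lemma~\ref{2} is fed into Theorem~\ref{thmmain} via Mazur's theorem --- is lost.

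The tool that realizes all the conditions at once, and the one the paper uses following Werner's Proposition 2.3 in \cite{MR1257062}, is the principle of local reflexivity: for finite-dimensional $E\subset X^{**}$ and $F\subset X^*$ there is $T_\alpha$ with $\|T_\alpha\|\le 1+\varepsilon$, $T_\alpha|_{E\cap X}=\mathrm{Id}$, $T_\alpha(E\cap J^{\perp\perp})\subset J$, and $\langle T_\alpha x^{**},x^*\rangle=\langle x^{**},x^*\rangle$ on $E\times F$. Setting $y_\alpha=T_\alpha(Qx/\|x\|)$, the pairing clause yields the $\sigma(X,J^*)$-convergence, while applying $T_\alpha$ to $\tfrac{z}{\beta}+x-\tfrac{Qx}{\|x\|}$ (which it sends to $\tfrac{z}{\beta}+x-y_\alpha$ since it fixes $z$ and $x$) converts the $X^{**}$-estimate into \eqref{genineq}, the factor $1+\varepsilon/\delta$ being absorbed into the additive $\varepsilon$; no $\delta$-net in $\beta$ is needed because the same $T_\alpha$ handles every $\beta\in(0,1]$ after dividing through by $\beta$. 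Replacing your Goldstine/$3$-ball step by this local reflexivity argument completes the proof.
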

\begin{proof}
We follow the proof technique of \cite{MR1257062}*{Proposition~2.3}. Let $Q$ denote the $M$-projection from $X^{**}$ onto $J^{\bot\bot}$. Then 
\begin{align*}
\|z+x-\frac{Qx}{\|x\|}\|
&=  \|Q(z+\frac{\|x\|-1}{\|x\|}x)+(I-Q)(z+x)\|\\
&\leq \max\{\|(z+\frac{\|x\|-1}{\|x\|}x)\|, \|z+J\|+\|x+J\|\} \mbox{ for every } z\in X.
\end{align*}

Let $A$ be the set of all triplets $\alpha=(E, F, \varepsilon)$ where $E\subset X^{**}$ and $F\subset X^*$ are finite dimensional subspaces and $\varepsilon >0$. Then $A$ can be partially ordered as: $(E_1, F_1, \varepsilon_1)\leq (E_2, F_2, \varepsilon_2)$ if $E_1\subseteq E_2$, $F_1\subseteq F_2$ and $\varepsilon_2\leq \varepsilon_1$. Then corresponding to each triplet $\alpha$, by principle of local reflexivity (\cite{MR1121711}*{Theorem 3.2}), there exists an operator $T_\alpha$ such that 
$\|T_\alpha\| \leqslant(1+\varepsilon)$, $T_\alpha|_{E \cap X}=\mathrm{Id}$, $T_\alpha\left(E \cap J^{\bot\bot}\right) \subset J $ and $\left\langle T_\alpha x^{* *}, x^{*}\right\rangle=\left\langle x^{* *}, x^{*}\right\rangle$  for every $x^{* *} \in E$ and $x^{*} \in F$. 
 Define $y_\alpha = T_\alpha(\frac{Qx}{\|x\|})$.
Then, by proceeding as in the proof of \cite{MR1257062}*{Proposition~2.3}, we can see that $y_{\alpha} \rightarrow \frac{x}{\|x\|}$ in the $\sigma\left(X, J^*\right)$-topology.
Assume now that  $\varepsilon > 0 $ be given and $z\in X$.  Write $\delta = \max\left\{\|z\|+\left|\|x\|-1\right|, \|z+J\|+\|x+J\|\right\}$. Let $\alpha_0 = (E_0, F_0, \frac{\varepsilon}{\delta})$, where $E_0$ and $F_0$ are finite dimensional subspaces of $X^{**}$ and $X^*$ respectively such that $z,x\in E_0$.

Now for any $1\geq \beta > 0$ and $\alpha \geq \alpha_0$, 
\begin{align*}
\left\|\frac{z}{\beta}+x-y_{\alpha}\right\|& = \left\|T_{\alpha}(\frac{z}{\beta}+x-\frac{Qx}{\|x\|})\right\|
\leq \left(1+\frac{\varepsilon}{\delta}\right)\left\|\frac{z}{\beta}+x-\frac{Qx}{\|x\|}\right\|\\
& \leq \left(1+\frac{\varepsilon}{\delta}\right)\operatorname{max}\left\{\left\|\frac{z}{\beta}+\frac{\|x\|-1}{\|x\|}x\right\|, \frac{1}{\beta}\|z+J\|+\|x+J\|\right\} \\
&\leq \operatorname{max}\left\{\left\|\frac{z}{\beta}+\frac{\|x\|-1}{\|x\|}x\right\|, \frac{1}{\beta}\|z+J\|+\|x+J\|\right\} + \frac{\varepsilon}{\beta\delta}\delta.
\end{align*}
Therefore, 
\begin{align*}
\|z+\beta(x-y_{\alpha})\| = \beta\|\frac{z}{\beta}+x-y_{\alpha}\| \leq \varepsilon + \operatorname{max}\left\{\|{z}+\frac{\|x\|-1}{\|x\|}\beta x\|, \|z+J\|+\beta\|x+J\|\right\} 
\end{align*}
for every $ \alpha\geq \alpha_0$ and $0<\beta\leq 1$.
\end{proof}

Now by applying  Lemma~\ref{lemmma} to $M$-ideals of compact operators and by following the arguments used in the proof of  (\rmnum{1})$\implies$ (\rmnum{2}) of \cite{MR1257062}*{Theorem~3.1}, we obtain the inequality in the following lemma which is basically a modification of the revised basic inequality studied in \cite{MR1257062}. 
\begin{lem}\label{2}
Let $X$ be a Banach space such that $\mathcal{K}(X)$ is an $M$-ideal in $\mathcal{B}(X)$ and let $T\in \mathcal{B}(X)$. Then there exists a net $(L_\alpha)$ in $\mathcal{K}(X)$  such that $L_{\alpha}^* \longrightarrow \frac{T^*}{\|T\|}$ in  WOT,  and for each $S\in \mathcal{B}(X)$ and $\varepsilon >0$ there exists an $\alpha_0$ such that  
	\begin{equation}\label{eq1}
	\|S+\beta (T-L_{\alpha})\|\leq \varepsilon +\operatorname{max}\left\{\left\|S+\frac{(\|T\|-1)}{\|T\|}\beta T\right\|, \|S\|_e+\beta\|T\|_e\right\}
	\end{equation}
	for every  $\alpha \geq \alpha_0$ and $0<\beta\leq 1.$ Consequently, 
	\begin{equation*}
	\limsup_\alpha \|S+\beta(T-L_\alpha)\|\leq \operatorname{max}\left\{\left\|S+\frac{(\|T\|-1)}{\|T\|}\beta T\right\|, \|S\|_e+\beta\|T\|_e\right\}\mbox{ for all } S\in \mathcal{B}(X) \mbox{ and } \beta\in [0, 1].
	\end{equation*} 
\end{lem}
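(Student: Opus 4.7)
The plan is to apply Lemma \ref{lemmma} directly, with the ambient Banach space taken to be $\mathcal{B}(X)$, the $M$-ideal taken to be $\mathcal{K}(X)$, and the distinguished element taken to be $T$. That lemma then produces a net $(L_\alpha)$ in $\mathcal{K}(X)$ converging to $T/\|T\|$ in the $\sigma(\mathcal{B}(X),\mathcal{K}(X)^*)$-topology and satisfying
\[ \|S+\beta(T-L_\alpha)\| \leq \varepsilon + \max\bigl\{\|S + \tfrac{\|T\|-1}{\|T\|}\beta T\|,\; \|S+\mathcal{K}(X)\| + \beta\|T+\mathcal{K}(X)\|\bigr\} \]
from some $\alpha_0$ onward. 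Since $\|S+\mathcal{K}(X)\| = d(S,\mathcal{K}(X)) = \|S\|_e$ and likewise $\|T+\mathcal{K}(X)\| = \|T\|_e$ by definition of the essential norm, the quantitative part of the claim is immediate, and the ``consequently'' statement follows by taking $\limsup_\alpha$ and letting $\varepsilon\to 0^{+}$.

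The non-trivial task is to upgrade the $\sigma(\mathcal{B}(X),\mathcal{K}(X)^*)$-convergence supplied by Lemma \ref{lemmma} to the WOT convergence $L_\alpha^* \to T^*/\|T\|$ on $\mathcal{B}(X^*)$. To achieve this I would fix $x^*\in X^*$ and $x^{**}\in X^{**}$ and introduce the functional $\psi(S) := \langle S^* x^*, x^{**}\rangle$ on $\mathcal{B}(X)$. The desired WOT convergence of the adjoints is, by definition, the statement $\psi(L_\alpha)\to \psi(T/\|T\|)$ for every such pair $(x^*,x^{**})$, while the hypothesis already delivers $\phi(L_\alpha)\to \phi(T/\|T\|)$ for every $\phi\in \mathcal{K}(X)^*$. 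So it suffices to show that, under the canonical identification of $\mathcal{K}(X)^*$ as a subspace of $\mathcal{B}(X)^*$ supplied by the $M$-ideal structure, the functional $\psi$ belongs to this subspace.

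For this I would invoke the defining feature of an $M$-ideal: every element of $\mathcal{K}(X)^*$ admits a unique norm-preserving Hahn-Banach extension to $\mathcal{B}(X)$. It is therefore enough to verify $\|\psi|_{\mathcal{K}(X)}\| = \|\psi\|_{\mathcal{B}(X)}$, for then $\psi$ is forced to be \emph{the} unique extension of its restriction and hence lies in the image of $\mathcal{K}(X)^*$. Both norms are bounded above by $\|x^*\|\,\|x^{**}\|$, and the matching lower bound on $\|\psi|_{\mathcal{K}(X)}\|$ is produced using rank-one operators: for $K = y^*\otimes z$ with $y^*\in X^*$ and $z\in X$ one computes $\psi(K) = x^*(z)\,x^{**}(y^*)$ and $\|K\|=\|y^*\|\|z\|$, so choosing $z\in S_X$ with $|x^*(z)|$ close to $\|x^*\|$ and $y^*\in S_{X^*}$ with $|x^{**}(y^*)|$ close to $\|x^{**}\|$ drives $|\psi(K)|/\|K\|$ arbitrarily close to $\|x^*\|\|x^{**}\|$.

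The main obstacle is precisely this identification step: one must recognize that the ``diagonal'' functionals $S\mapsto \langle S^* x^*, x^{**}\rangle$ sit inside $\mathcal{K}(X)^*$ viewed as a subspace of $\mathcal{B}(X)^*$, which is not immediate and relies essentially on the $M$-ideal hypothesis. Once this is in place, the WOT convergence of the adjoints falls out of the weak convergence produced by Lemma \ref{lemmma}, the quantitative inequality is a direct translation of that lemma via the identification $\|\cdot + \mathcal{K}(X)\| = \|\cdot\|_e$, and the entire statement is complete.
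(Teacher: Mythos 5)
Your proposal is correct and follows essentially the same route as the paper: the paper's proof also consists of applying Lemma~\ref{lemmma} with $J=\mathcal{K}(X)$ in the ambient space $\mathcal{B}(X)$ (so that $\|S+\mathcal{K}(X)\|=\|S\|_e$) and then invoking the arguments of Werner's Theorem~3.1 (i)$\implies$(ii) for the WOT convergence of the adjoints. The identification of the functionals $S\mapsto \langle S^*x^*,x^{**}\rangle$ as elements of the canonical copy of $\mathcal{K}(X)^*$ inside $\mathcal{B}(X)^*$, via the unique norm-preserving extension property and the rank-one operator computation, is exactly the content of the cited argument, which you have correctly reconstructed.
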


We now prove the main theorem of this section. We prove the ball proximinality of $\mathcal{K}(X)$ when it is an $M$-ideal in $\mathcal{B}(X)$. For an operator $T$ with $\|T\|>1$,  we construct, using the basic inequality method,   a sequence $(\lambda_i)$ of positive numbers and a sequence $(T_{\alpha(i)})$  of compact operators such that  $K= \sum_{i=1}^{\infty}\lambda_iT_{\alpha(i)}$ is a best approximation to $T$ from the closed unit ball of $\mathcal{K}(X)$.

\begin{thm}\label{thmmain}
	Let $X$ be a Banach space such that $\mathcal{K}(X)$ is an $M$-ideal in $\mathcal{B}(X)$. Then $\mathcal{K}(X)$ is ball proximinal in $\mathcal{B}(X)$. Moreover, $d(T, B_{\mathcal{K}(X)}) = \operatorname{max}\left\{\left\|T\right\|-1, d(T, {\mathcal{K}(X)})\right\}$ for all $T\in \mathcal{B}(X).$
\end{thm}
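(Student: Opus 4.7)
The plan is to establish both the distance formula $d(T,B_{\mathcal{K}(X)})=\max(\|T\|-1,d(T,\mathcal{K}(X)))$ and the existence of a best approximation by running the basic-inequality iteration from Lemma~\ref{2}. The lower bound is immediate: for any $K\in B_{\mathcal{K}(X)}$ one has $\|T-K\|\geq \|T\|-\|K\|\geq \|T\|-1$ and $\|T-K\|\geq d(T,\mathcal{K}(X))$. I would dispatch the easy case $\|T\|\leq 1$ separately (here $\max(\|T\|-1, d(T,\mathcal{K}(X)))=d(T,\mathcal{K}(X))$, and proximinality of the $M$-ideal $\mathcal{K}(X)$ together with a renormalization argument produces a best approximant in $B_{\mathcal{K}(X)}$) and concentrate on $\|T\|=c>1$, writing $e=\|T\|_e$ and $d=\max(c-1,e)$.

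For $c>1$, I would apply Lemma~\ref{2} to $T$ to obtain the net $(L_\alpha)$ in $\mathcal{K}(X)$, then pick positive scalars $(\lambda_n)$ with $\sum\lambda_n=1$, small $(\varepsilon_n)$ with $\sum\varepsilon_n<\infty$, and positive $(\delta_n)$ with $\sum\lambda_n\delta_n<\infty$. Inductively I would pick $\alpha(n)$ as follows: with $S_{n-1}=\sum_{i<n}\lambda_i(T-L_{\alpha(i)})$ and $K_{n-1}=\sum_{i<n}\lambda_i L_{\alpha(i)}$ already in hand, I would use (\ref{eq1}) with $S=S_{n-1}$, $\beta=\lambda_n$, $\varepsilon=\varepsilon_n$ to choose $\alpha(n)$ far enough along in the net so that
\[
\|S_n\|=\|S_{n-1}+\lambda_n(T-L_{\alpha(n)})\|\leq \varepsilon_n+\max\left\{\|S_{n-1}+\tfrac{c-1}{c}\lambda_n T\|,\ \|S_{n-1}\|_e+\lambda_n e\right\},
\]
and simultaneously $\|L_{\alpha(n)}\|\leq 1+\delta_n$; the latter is possible because the net constructed in Lemma~\ref{lemmma} satisfies $\|L_\alpha\|\leq 1+\varepsilon_\alpha$ with $\varepsilon_\alpha\downarrow 0$ along it.

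The core of the argument is the inductive estimate $\|S_n\|\leq \sum_{i\leq n}\varepsilon_i+\sigma_n d$ where $\sigma_n=\sum_{i\leq n}\lambda_i$. The structural observation making this work is that $K_{n-1}$ is compact, so $\|S_{n-1}\|_e=\|\sigma_{n-1}T\|_e=\sigma_{n-1}e$; thus the second entry of the max is at most $\sigma_n e\leq \sigma_n d$, while the first entry is bounded via the triangle inequality by $\|S_{n-1}\|+(c-1)\lambda_n$, which by the induction hypothesis and the trivial $c-1\leq d$ is at most $\sum_{i<n}\varepsilon_i+\sigma_n d$. Since $\sum\lambda_n(1+\delta_n)<\infty$, the partial sums $K_n$ are norm-Cauchy and converge to a compact operator $K$ with $\|K\|\leq 1+\sum\lambda_n\delta_n$; and the identity $T-K_n=(1-\sigma_n)T+S_n$ combined with $\sigma_n\to 1$ yields $\|T-K\|\leq d+\sum\varepsilon_n$ in the limit.

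The main obstacle will be upgrading this $\varepsilon$-approximation to a \emph{genuine} best approximation with $\|K\|\leq 1$ exactly and $\|T-K\|=d$. My approach would be to take $\sum\varepsilon_n$ and $\sum\lambda_n\delta_n$ arbitrarily small in one construction, normalize $K\mapsto K/\max(1,\|K\|)$ to land in $B_{\mathcal{K}(X)}$ within $d+O(\varepsilon)$ of $T$ (this already gives the distance formula), and then, for existence of a minimizer, diagonalize over $\varepsilon\to 0$, extracting a norm-convergent limit of the family $\{K^{(\varepsilon)}/\max(1,\|K^{(\varepsilon)}\|)\}$. The $M$-ideal hypothesis should provide the rigidity needed — through the $M$-projection $Q$ on $\mathcal{B}(X)^{**}$ onto $\mathcal{K}(X)^{\perp\perp}$ and the explicit form $L_\alpha=T_\alpha(QT/\|T\|)$ of the approximants from Lemma~\ref{lemmma} — to force the limit to remain compact and to saturate the distance $d$.
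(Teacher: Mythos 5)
Your argument establishes the distance formula $d(T,B_{\mathcal{K}(X)})=\operatorname{max}\{\|T\|-1,d(T,\mathcal{K}(X))\}$ correctly (the inductive estimate $\|S_n\|\leq\sum_{i\leq n}\varepsilon_i+\sigma_n d$ is sound, and normalizing $K^{(\varepsilon)}$ gives the upper bound on the distance), but it does not prove ball proximinality, which is the main content of the theorem. The final step --- ``diagonalize over $\varepsilon\to 0$, extracting a norm-convergent limit of the family $K^{(\varepsilon)}/\operatorname{max}(1,\|K^{(\varepsilon)}\|)$'' --- is exactly the point where a proof is needed and none is given. The sets $\{K\in B_{\mathcal{K}(X)}:\|T-K\|\leq d+\varepsilon\}$ are nested, closed, bounded and convex, but $\mathcal{K}(X)$ is not reflexive, so their intersection can perfectly well be empty; there is no compactness available to extract a norm-convergent subsequence, and the appeal to ``$M$-ideal rigidity'' is not an argument. (A smaller instance of the same problem occurs in your $\|T\|\leq 1$ case: if $K_0$ is a best approximant from $\mathcal{K}(X)$, renormalizing it to $K_0/\operatorname{max}(1,\|K_0\|)$ need not preserve the distance, so you must produce a best approximant that already lies in the ball.)

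The paper avoids the limiting procedure entirely by arranging that a \emph{single} series already works, and this requires two ideas you are missing. First, the approximating net must have norm at most $1$ \emph{exactly}, not $1+\delta_n$: the paper takes the net $(K_\alpha)$ with $\|K_\alpha\|\leq 1$ and $K_\alpha^*\to T^*/\|T\|$ in SOT furnished by \cite{MR1238713}*{Chapter~VI, Proposition~4.10}, notes that $K_\alpha-L_\alpha\to 0$ weakly in $\mathcal{K}(X)$, and passes to convex combinations $T_\alpha\in\conv\{K_\beta:\beta\geq\alpha\}$ with $\|T_\alpha\|\leq 1$ that still satisfy inequality (\ref{eq1}). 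Second, and more importantly, the coefficients $\lambda_i$ are \emph{not} fixed in advance with $\sum\lambda_i=1$; they are chosen adaptively so that $\|\sum_{i=1}^{n}\lambda_i(T-T_{\alpha(i)})\|=\operatorname{max}\{\|T\|-1,\|T\|_e\}-\varepsilon_n$ holds with equality at every stage. This forces $\|T-K\|$ to equal $d$ exactly in the limit (no error term $\sum\varepsilon_i$ to remove), at the price of only knowing $\sum\lambda_i\leq 1$ a priori; the remaining work --- the case analysis on where the maximum in (\ref{ineqaulitymain}) is attained, and the extreme-point-style contradiction showing a best approximant to $bT$ from $B_{\mathcal{B}(X)}$ must have norm $1$ --- is devoted to proving $\sum\lambda_i=1$, whence $K=\sum\lambda_iT_{\alpha(i)}$ lies in $B_{\mathcal{K}(X)}$ and is a genuine best approximation. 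Without these two ingredients your construction yields only approximate minimizers, i.e.\ the distance formula but not proximinality.
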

\begin{proof}
Let $T\in \mathcal{B}(X)$. Without loss of generality, we may assume that $T$ is not a compact operator. For, if $T$ is a compact operator then $\frac{T}{\|T\|}$ is a best approximation to $T$ from $B_{\mathcal{K}(X)}$.

Since $\mathcal{K}(X)$ is an $M$-ideal in $\mathcal{B}(X)$, by \cite{MR1238713}*{Chapter~\Rmnum{6}, Proposition~4.10}, there exists a net $(K_\alpha)$ of compact operators  with $\|{K_\alpha}\| \leq 1 $ and  $(K_\alpha^*)$ converges to $\frac{T^{*}}{\|T\|}$ in SOT. Let $(L_\alpha)$ be the net devised from Lemma~\ref{2}. Then $(K_\alpha^* - L_\alpha^*)$ converges to 0 in WOT. Hence $(K_\alpha - L_\alpha)$ converges to 0 in the weak topology  $\sigma\left(\mathcal{K}(X), \mathcal{K}(X)^{*}\right)$. Thus $\|{T}_{\alpha}-{S}_{\alpha}\|\to 0$ for some ${T}_{\alpha} \in \conv\{K_{\beta} | \beta \geqslant \alpha\}$ and ${S}_{\alpha} \in \conv\{L_{\beta} | \beta \geqslant \alpha\}$, where $\conv$ denotes the convex hull. Since $({S}_{\alpha})$ satisfies inequality (\ref{eq1}),  the net $({T}_{\alpha})$ of compact operators with $\|T_\alpha\|\leq 1$ also satisfies inequality (\ref{eq1}).

 Suppose $\|T\|> 1$.  Recall that $d(T, {\mathcal{K}(X)}) = \|T\|_e$. Let $0< a < 1$ be such that  $a\|T\|-1 > 0$. Pick  a strictly decreasing sequence $(\varepsilon_{i})$ of positive numbers such that   $\sum_{i=1}^{\infty}\varepsilon_i$ converges and 
$a \leq \frac{\operatorname{max}\left\{\left\|T\right\|-1,\|T\|_e\right\}-\sum_{i=1}^{\infty}\varepsilon_i}{\operatorname{max}\left\{\left\|T\right\|-1,\|T\|_e\right\}}$.

Now by using a  technique similar to the one used in \cite{MR1971228}*{Theorem~1},  we inductively construct a sequence $(\lambda_i)$ of positive numbers and a sequence $(T_{\alpha(i)})$  of compact operators from the net $(T_\alpha)$  such that 
\begin{align}
\sum_{i=1}^{\infty}\lambda_i  \leq & 1, \label{lambda}\\
\left \|\sum_{i=1}^{n}\lambda_i(T-T_{\alpha(i)})\right\|  = & \operatorname{max}\left\{\|T\|-1,\|T\|_e\right\}-\varepsilon_n \mbox{ for  } n = 1, 2,\ldots. \label{12} 
\end{align}
\begin{subequations}
	\begin{align}
	&\|\lambda_1(T-T_{\alpha(1)})\|\leq \varepsilon_1+ \operatorname{max}\{\lambda_1\left(\|T\|-1\right), \lambda_1\|T\|_e\},\label{b}\\
	&\left\|\sum_{i=1}^{n+1}\lambda_i(T-T_{\alpha(i)})\right\| \leq  \varepsilon_{n+1}+ \operatorname{max}\left\{\left\|  \sum_{i=1}^{n}\lambda_i(T-T_{\alpha(i)}) +\frac {\lambda_{n+1}(\|T\|-1)T}{\|T\|}\right\|,  (\sum_{i=1}^{n+1}\lambda_i)\left\| T\right\|_e\right\} \label{ineqaulitymain}	
\end{align} for $n =1, 2, \ldots$.
\end{subequations}

We start the construction by taking  $S= 0 $ in Lemma~\ref{2}. Then there exists an $\alpha(1)$ such that 
$$\left\|\beta(T-T_{\alpha(1)})\right\| \leq \varepsilon_1+\operatorname{max}\left\{\left\|\frac{(\|T\|-1)}{\|T\|}\beta T\right\|, \beta\|T\|_e \right\} \mbox{ for all }\beta\in [0, 1].$$ 

Now let $\lambda_1>0$ be such that $\|\lambda_1(T-T_{\alpha(1)})\| = \operatorname{max}\left\{\|T\|-1,\|T\|_e\right\}-\varepsilon_1$.

Suppose we have chosen scalars $\lambda_1, \lambda_2,..., \lambda_n$ and indices $\alpha(1), \alpha(2),..., \alpha(n)$ such that $\alpha(1)<\alpha(2)<...<\alpha(n)$ and 
\begin{equation}\label{ind12}
\left \|\sum_{i=1}^{n}\lambda_i(T-T_{\alpha(i)})\right\| = \operatorname{max}\left\{\|T\|-1,\|T\|_e\right\}-\varepsilon_n. 
\end{equation}
Let $S = \sum_{i=1}^{n}\lambda_i(T-T_{\alpha(i)})$. Then, by Lemma~\ref{2}, there exists an ${\alpha(n+1)}>\alpha{(n)}$ such that 
\begin{equation}\label{10}
\left\|S+\beta(T-T_{\alpha(n+1)})\right\| \leq \varepsilon_{n+1}+ \operatorname{max}\left\{\left\|  S +\frac {\beta(\|T\|-1)T}{\|T\|}\right\|, \left\| S \right\|_e+\beta\|T\|_e\right\}  \mbox{ for all } \beta \in [0, 1].
\end{equation}
Now, to obtain $\lambda_{n+1}$, we consider the quantity:
\begin{equation}\label{eq2}
\left\|S+\beta(T-T_{\alpha(n+1)})\right\| = \left\|\sum_{i=1}^{n}\lambda_i(T-T_{\alpha(i)})+\beta(T-T_{\alpha(n+1)})\right\|.
\end{equation}
For $\beta = 0$, by the induction hypothesis (\ref{ind12}), the quantity (\ref{eq2}) becomes $\operatorname{max}\left\{\|T\|-1,\|T\|_e\right\}-\varepsilon_n$. As  $\beta\longrightarrow \infty$,  (\ref{eq2}) becomes larger than  $\operatorname{max}\left\{\|T\|-1,\|T\|_e\right\}-\varepsilon_{n+1}$. Hence there exists a $\lambda_{n+1}$ such that  
\begin{equation}\label{13}
\left\|\sum_{i=1}^{n+1}\lambda_i(T-T_{\alpha(i)})\right\| = \operatorname{max}\left\{\|T\|-1,\|T\|_e\right\}-\varepsilon_{n+1}. 
\end{equation} 

Then, from (\ref{13}), we obtain
$$ \operatorname{max}\left\{\|T\|-1,\|T\|_e\right\}-\varepsilon_{n+1} = \left\|\sum_{i=1}^{n+1}\lambda_i(T-T_{\alpha(i)})\right\| =  \left\|(\sum_{i=1}^{n+1}\lambda_i)T-\sum_{i=1}^{n+1}\lambda_iT_{\alpha(i)}\right\|\geq (\sum_{i=1}^{n+1}\lambda_i)\operatorname{max}\left\{\|T\|-1,\|T\|_e\right\}.$$ 
Thus  $\sum_{i=1}^{n+1}\lambda_i \leq 1$. 

Now take $\beta = \lambda_{n+1}$ in (\ref{10}). Then we have
\begin{align*}
\left\|\sum_{i=1}^{n+1}\lambda_i(T-T_{\alpha(i)})\right\| &= \left\|\sum_{i=1}^{n}\lambda_i(T-T_{\alpha(i)})+\lambda_{n+1}(T-T_{\alpha(n+1)})\right\|\\
&\leq \varepsilon_{n+1}+ \operatorname{max}\left\{\left\|  \sum_{i=1}^{n}\lambda_i(T-T_{\alpha(i)}) +\frac {\lambda_{n+1}(\|T\|-1)T}{\|T\|}\right\|, \left\| \sum_{i=1}^{n}\lambda_i(T-T_{\alpha(i)}) \right\|_e+\lambda_{n+1}\|T\|_e\right\} \\
&\leq \varepsilon_{n+1}+ \operatorname{max}\left\{\left\|  \sum_{i=1}^{n}\lambda_i(T-T_{\alpha(i)}) +\frac {\lambda_{n+1}(\|T\|-1)T}{\|T\|}\right\|,  (\sum_{i=1}^{n}\lambda_i)\left\| T\right\|_e+\lambda_{n+1}\|T\|_e\right\}\\
&\leq \varepsilon_{n+1}+ \operatorname{max}\left\{\left\|  \sum_{i=1}^{n}\lambda_i(T-T_{\alpha(i)}) +\frac {\lambda_{n+1}(\|T\|-1)T}{\|T\|}\right\|,  (\sum_{i=1}^{n+1}\lambda_i)\left\| T\right\|_e\right\}.
\end{align*}

Hence, by induction, we obtain a sequence $(\lambda_i)$ of positive numbers and a sequence $(T_{\alpha(i)})$ of compact operators  satisfying (\ref{lambda}), (\ref{12}), (\ref{b}), (\ref{ineqaulitymain}). 

Since $\sum_{i=1}^{\infty}\lambda_i \leq 1$, it is easy to see that the series  $\sum_{i=1}^{\infty}\lambda_iT_{\alpha(i)}$ converges.  Let $K=\sum_{i=1}^{\infty}\lambda_iT_{\alpha(i)}$. Then $K$ is a compact operator and $\|K\|\leq 1$.

From  (\ref{lambda}) and  (\ref{12}), it follows that  
\begin{equation}\label{max}
\left\|\sum_{i=1}^{\infty}\lambda_i(T-T_{\alpha(i)})\right\| =  \operatorname{max}\left\{\left\|T\right\|-1, \|T\|_e\right\}.
\end{equation}

If $\sum_{i=1}^{\infty}\lambda_i = 1$, then from (\ref{max}) and using the fact that $\operatorname{max}\left\{\left\|T\right\|-1, \|T\|_e\right\}\leq d(T, B_{\mathcal{K}(X)}) $, we get 
$$d(T, B_{\mathcal{K}(X)})\leq \left\| T-K\right\|=\left\|\sum_{i=1}^{\infty}\lambda_i(T-T_{\alpha(i)})\right\|= \operatorname{max}\left\{\left\|T\right\|-1, \|T\|_e\right\}\leq d(T, B_{\mathcal{K}(X)}).$$
Hence $K$ is a best approximation to $T$ from the closed unit ball of $\mathcal{K}(X)$.

So it is enough to prove that $\sum_{i=1}^{\infty}\lambda_i \geq 1$. To obtain this, we split the proof into two cases depending on where the maximum occurs in the inequality (\ref{ineqaulitymain}).

{\bf Case 1:}   Assume that the maximum in the inequality (\ref{ineqaulitymain}) is attained at the second term for infinitely many $n$. That is, there exists an increasing sequence $(n_k)$ of positive integers such that 
$\left\|\sum_{i=1}^{n_k+1}\lambda_i(T-T_{\alpha(i)})\right\| \leq \varepsilon_{n_k+1}+ (\sum_{i=1}^{n_k+1}\lambda_i)\left\| T\right\|_e \leq \varepsilon_{n_k+1}+ (\sum_{i=1}^{\infty}\lambda_i)\left\| T\right\|_e $.
Then $$\operatorname{max}\left\{\|T\|-1, \|T\|_e\right\} =  \lim_{k\rightarrow\infty} \left\|  \sum_{i=1}^{n_k}\lambda_i(T-T_{\alpha(i)})\right\| \leq \lim_{k\rightarrow\infty}   \varepsilon_{n_k}+ (\sum_{i=1}^{\infty}\lambda_i)\left\| T\right\|_e,$$
Hence $\operatorname{max}\left\{\|T\|-1, \|T\|_e\right\}\leq (\sum_{i=1}^{\infty}\lambda_i)\left\| T\right\|_e$ and therefore $(\sum_{i=1}^{\infty}\lambda_i)\geq 1.$

{\bf Case 2:}  Assume that the maximum in the inequality  (\ref{ineqaulitymain}) is attained at  the second term only for finitely many $n$. 

We prove the case when the  maximum  in the inequality  (\ref{ineqaulitymain}) is attained at the  second term for at least one $n\in \mathbb{N}$ or maximum  in the inequality (\ref{b}) is attained at the  second term. A similar proof holds  even if the  maximum  in both inequalities  (\ref{b}) and  (\ref{ineqaulitymain}) is not attained at the  second term for any $n\in \mathbb{N}$. Hence  we assume that there exists a positive integer $N$ such that
$$\left\|\sum_{i=1}^{N}\lambda_i(T-T_{\alpha(i)})\right\| \leq \varepsilon_{N}+(\sum_{i=1}^{N }\lambda_i)\left\| T\right\|_e,\quad \left\|\sum_{i=1}^{m}\lambda_i(T-T_{\alpha(i)})\right\| \leq \varepsilon_{m}+ \left\|  \sum_{i=1}^{m-1}\lambda_i(T-T_{\alpha(i)}) +\frac {\lambda_{m}(\|T\|-1)T}{\|T\|}\right\|$$ for all $m>N.$ Now for any $m>N$, 
\begin{align*}
\varepsilon_{m}+\left\|\sum_{i=1}^{m-1}\lambda_i(T-T_{\alpha(i)}) +\frac {\lambda_{m}(\|T\|-1)T}{\|T\|}\right\|& \leq \varepsilon_{m}+ \left\|\sum_{i=1}^{m-1}\lambda_i(T-T_{\alpha(i)})\right\|+\left\|\frac{\lambda_{m}(\|T\|-1)T}{\|T\|}\right\|\\
&\leq \varepsilon_{m}+\left\|\sum_{i=1}^{m-1}\lambda_i(T-T_{\alpha(i)})\right\|+\lambda_{m}(\|T\|-1)\\
&\leq \left\|\sum_{i=1}^{m-k=N}\lambda_i(T-T_{\alpha(i)})\right\|+\sum_{i=N+1}^{m}\lambda_{i}(\|T\|-1)+\sum_{i=N+1}^{m}\varepsilon_{i}.
\end{align*}  
Now using (\ref{max}) and letting $m \to \infty$ in the above inequality, we can see that
\begin{align}
\operatorname{max}\left\{\left\|T\right\|-1, \|T\|_e\right\} 
= \left\|\sum_{i=1}^{\infty}\lambda_i(T-T_{\alpha(i)})\right\|&= \lim_{m\to \infty}\left\|\sum_{i=1}^{m}\lambda_i(T-T_{\alpha(i)})\right\|\nonumber\\
&\leq \left\|\sum_{i=1}^{N}\lambda_i(T-T_{\alpha(i)})\right\| + \sum_{i=N+1}^{\infty}\lambda_i(\|T\|-1)+\sum_{i=N+1}^{\infty}\varepsilon_i\nonumber\\
&\leq \varepsilon_{N}+(\sum_{i=1}^{N}\lambda_i)\left\| T\right\|_e +  \sum_{i=N+1}^{\infty}\lambda_i(\|T\|-1)+\sum_{i=N+1}^{\infty}\varepsilon_i \nonumber\\
&\leq  \sum_{i=1}^{\infty}\lambda_i\operatorname{max}\left\{\left\|T\right\|-1, \|T\|_e\right\}+\sum_{i=1}^{\infty}\varepsilon_i.\label{similar}
\end{align}
Thus  $a \leq \frac{\operatorname{max}\left\{\left\|T\right\|-1,\|T\|_e\right\}-\sum_{i=1}^{\infty}\varepsilon_i}{\operatorname{max}\left\{\left\|T\right\|-1,\|T\|_e\right\}}\leq \sum_{i=1}^{\infty}\lambda_i\leq 1 $. 

Now we will prove that $ \sum_{i=1}^{\infty}\lambda_i \geq 1$.  Let $b =\sum_{i=1}^{\infty}\lambda_i$. 

Suppose $b<1$. Then, by (\ref{max}),  $\left\|bT-K\right\| =  \operatorname{max}\left\{\left\|T\right\|-1,\|T\|_e\right\}$.  
Take $t =1- \frac{b\|T\|-1}{\|T\|}$. Then  $0<t<1$ and
$\left\|bT-\left(tbT+(1-t)K\right)\right\|=(1-t)\operatorname{max}\left\{\left\|T\right\|-1,\|T\|_e\right\}\leq (1-t)\|T\| \leq b\|T\| -1 .$ 
Now take $\lambda = \frac{1}{tb\|T\|+1}$. Since $0<\lambda < 1 $ and $\|bT- \frac{T}{\|T\|}\|=b\|T\|-1$, we have 
\begin{align*}
&\left\|bT-\left(\lambda \left(tbT+\left(1-t\right)K\right)+\left(1-\lambda\right)\frac{T}{\|T\|}\right)\right\| \leq b\|T\| -1. 
\end{align*}
Since $\lambda \left(tbT+\left(1-t\right)K\right)+\left(1-\lambda\right)\frac{T}{\|T\|}=\left(\lambda tb-\frac{(1-\lambda)}{\|T\|}\right)T+\lambda\left(1-t\right)K=\lambda(1-t)K$, it follows that 
$ \left\|bT-\lambda(1-t)K\right\| \leq b\|T\|-1$. Thus $\lambda(1-t)K$ is a best approximation to $bT$ from the closed unit ball of $\mathcal{B}(X)$ and $\|\lambda(1-t)K\| < b <1$, which is a contradiction as a best approximation to $bT$ from the closed unit ball of $\mathcal{B}(X)$  always has norm $1$. Hence $b=\sum_{i=1}^{\infty}\lambda_i = 1$.
 
 Now let $T\in \mathcal{B}(X) $  be such that $\|T\| \leq 1$. Then, by \cite{MR1238713}*{Chapter~\Rmnum{6}, Proposition 4.10}, there exists a net $(K_\alpha)$ of compact operators  with $\|{K_\alpha}\| \leq 1 $ and  $(K_\alpha^*)$ converges to $T^*$ in SOT. Then, by \cite{MR1257062}*{Corollary 3.2}, there exists a compact operator  $ K \in \overline{\conv}\{K_\alpha \}$ such that  $\|T-K\| = d(T, \mathcal{K}(X))$, where $\overline{\conv}\{K_\alpha\}$ denotes the norm closure of $\conv\{K_\alpha\}$. Since $\|K_{\alpha}\|\leq 1 $, we get $\|K\| \leq 1$ and $d(T, B_{\mathcal{K}(X)}) = d(T, {\mathcal{K}(X)}) =\|T-K\|$. Hence $\mathcal{K}(X)$ is ball proximinal in $\mathcal{B}(X)$.
\end{proof}

We now state a remark which follows from \cite{MR1971228}*{Theorem~1}.
\begin{rmk}
Let $H$ be a separable Hilbert space. Then for each positive (self-adjoint) operator $T$ on $H$, there exists a compact positive (self-adjoint)  operator $K$ such that $d(T, \mathcal{K}(H))=\|T-K\|$. For, let	$\{e_1,e_2,...\}$ be an orthonormal basis for $H$ and $P_n$ be the orthogonal projection onto $\{e_1,e_2,...,e_n\}$. Put $T_n=P_nTP_n$ for $n\ge 1$. Then $(T_n)$ is a sequence of compact positive (self-adjoint)  operator such that $T_n \rightarrow T$ in SOT and $T_n^*\rightarrow T^*$ in SOT. Now, by \cite{MR1971228}*{Theorem~1}, there exists a sequence $(a_n)$ of non-negative real numbers such that $K=\sum_{n=1}^\infty a_n T_n $ is a compact positive (self-adjoint) operator that satisfies $d(T, \mathcal{K}(H))=\|T-K\|$.
\end{rmk}

The following result shows that positive (self-adjoint) operators on a Hilbert space $H$ has a positive (self-adjoint) compact approximant from the closed unit ball of $\mathcal{K}(H)$. 
\begin{cor}
	Let $H$ be a separable Hilbert space. Then for each positive (self-adjoint) operator $T$ on $H$ with $\|T\|\geq 1$, there exists a positive (self-adjoint) compact operator $K$ on $H$ such that $\|K\|\leq 1$ and $d(T, B_{\mathcal{K}(H)}) = \|T-K\|.$ 
\end{cor}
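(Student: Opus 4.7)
The plan is to build the best approximant explicitly through the continuous functional calculus applied to $T$. By Theorem~\ref{Hcase}, set $d := d(T, B_{\mathcal{K}(H)}) = \max\{\|T\|-1,\,\|T\|_e\}$; for $T$ self-adjoint one has $\|T\|_e = \max\{|\lambda|:\lambda\in\sigma_e(T)\}$ (standard fact about the essential norm), and in the positive case $\sigma(T)\subseteq[0,\|T\|]$ with $\sigma_e(T)\subseteq[0,\|T\|_e]$. The preceding remark produces a positive compact operator attaining $d(T,\mathcal{K}(H))$, but we need the sharper conclusion $\|K\|\le 1$, which we arrange by a level truncation.

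In the positive case I would define
\[
g(t) := \min\bigl\{\max\{t-d,\,0\},\,1\bigr\}, \qquad t\in[0,\|T\|],
\]
and put $K:=g(T)$. Then $0\le g\le 1$ gives $0\le K\le I$, so $K$ is positive with $\|K\|\le 1$. A three-interval verification on $[0,d]$, $[d,d+1]$, and $[d+1,\|T\|]$ (the last interval only nonempty when $d=\|T\|-1$, in which case $t-1\le d$ there) shows $|t-g(t)|\le d$ for every $t\in[0,\|T\|]$, so $\|T-K\|\le d$; the reverse inequality is automatic from $K\in B_{\mathcal{K}(H)}$, giving equality. For the self-adjoint case, replace $g$ by the odd analogue
\[
h(t):=\sgn(t)\cdot\min\bigl\{\max\{|t|-d,\,0\},\,1\bigr\},
\]
which still vanishes on $[-\|T\|_e,\|T\|_e]\supseteq\sigma_e(T)$; the same case analysis yields $|t-h(t)|\le d$ and hence $\|T-h(T)\|\le d$, with $h(T)$ self-adjoint of norm at most $1$.

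The main technical point will be the compactness of $K$. Since $g$ (resp.\ $h$) is continuous on $\sigma(T)$ and vanishes on $\sigma_e(T)$, a standard spectral-theoretic argument applies: the spectrum of $T$ outside any neighbourhood of $\sigma_e(T)$ consists of isolated eigenvalues of finite multiplicity, so $g$ can be uniformly approximated on $\sigma(T)$ by continuous functions supported away from $\sigma_e(T)$, each producing a finite-rank operator via the functional calculus; hence $K$ is a uniform limit of finite-rank operators and is therefore compact. This is precisely the spectral ingredient that makes the remark work, and truncation at level $1$ is what upgrades the remark to the ball-proximinal statement required by the corollary.
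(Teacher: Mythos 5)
Your proof is correct, but it takes a genuinely different route from the paper. The paper's proof is a one-line specialization of the machinery of Theorem~\ref{thmmain}: it compresses $T/\|T\|$ to the finite-dimensional subspaces spanned by an orthonormal basis, obtaining positive (self-adjoint) contractions $K_n=P_n\frac{T}{\|T\|}P_n$ with $K_n\to T/\|T\|$ and $K_n^*\to T^*/\|T\|$ in SOT, and then runs the basic-inequality induction to produce $K=\sum_n\lambda_n T_n$ with $T_n\in\conv\{K_m\}$; positivity and self-adjointness survive because these properties are preserved under convex combinations and norm limits. You instead write down the approximant explicitly as $K=g(T)$ via the continuous functional calculus, with $g$ the truncation of $t\mapsto t-d$ to $[0,1]$, and verify $\|T-g(T)\|=\sup_{\sigma(T)}|t-g(t)|\le d$ directly; compactness follows since $g$ vanishes on $[0,d]\supseteq\sigma_e(T)$ and $g(T)$ is a norm limit of the finite-rank operators $\max\{g-\varepsilon,0\}(T)$ (the spectrum of a self-adjoint operator outside its essential spectrum consists of isolated eigenvalues of finite multiplicity). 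Each step of yours checks out, including the boundary case $[d+1,\|T\|]$, which is at most the single point $\|T\|$. What your approach buys: an explicit, canonical best approximant, no use of the basic-inequality induction, and in fact no need for separability of $H$, since you never invoke a countable orthonormal basis. What the paper's approach buys: uniformity with the rest of the article — the same convex-combination argument works in the general $M$-ideal setting where no functional calculus is available — at the cost of a nonconstructive approximant. It would be worth stating explicitly in your write-up that the essential (Weyl) spectrum used in the compactness step coincides with the Calkin-algebra spectrum for self-adjoint operators, which is the fact that makes "$g$ vanishes on $\sigma_e(T)$" imply compactness of $g(T)$.
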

\begin{proof}
Let $\{e_1,e_2,...\}$ be an orthonormal basis for $H$ and $P_n$ be the orthogonal projection onto $\{e_1,e_2,...,e_n\}$ for $n\ge 1$. Put $K_n=P_n\frac{T}{\|T\|}P_n$ for $n\ge 1$. Then $(K_n)$ is a sequence of positive  (self-adjoint) compact operators such that $\|K_n\|\leq 1$, $K_n \rightarrow \frac{T}{\|T\|}$ in SOT and $K_n^*\rightarrow \frac{T^*}{\|T\|}$ in SOT. Now, by proceeding as in the proof  of  Theorem~\ref{thmmain}, we get  a sequence $(\lambda_n)$ of non-negative real numbers  such that $K=\sum_{n=1}^\infty \lambda_n T_n $ is a compact positive (self-adjoint) operator   that satisfies $\|K\|\leq 1$ and $\|T-K\|= d(T, B_{\mathcal{K}(H)}),$  where $T_n\in \conv\{K_m : m\geq 1\}.$
\end{proof}

 We observe from the proof of Theorem \ref{thmmain} that if $Y$ is an $M$-ideal in $X$ and if there is a net $(y_\alpha)$ such that $\|y_\alpha\| \leq 1$ and $(y_\alpha)$ satisfies the inequality (\ref{genineq}) of Lemma~\ref{lemmma}, then $Y$ is ball proximinal in $X$ and for $x\in X$, $d(x, B_Y) = \operatorname{max}\{\|x\|-1, d(x, Y)\}$. We now use this fact to prove that $M$-embedded spaces are ball proximinal in its bidual.
\begin{thm}
Let $X$ be an $M$-embedded space. Then $X$ is ball proximinal in $X^{**}$ and  $d(x^{**}, B_X) = \operatorname{max}\{\|x^{**}\|-1,d(x^{**},X) \}$ for $x^{**}\in X^{**}$. 
\end{thm}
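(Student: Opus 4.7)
The plan is to invoke the general principle recorded in the paragraph preceding the theorem: since $X$ is $M$-embedded, $X$ is an $M$-ideal in $X^{**}$, so it suffices to produce, for each $x^{**}\in X^{**}$, a net $(v_\gamma)\subseteq B_X$ satisfying the inequality (\ref{genineq}) of Lemma~\ref{lemmma} with $J=X$, ambient Banach space $X^{**}$, and $x=x^{**}$. Applied verbatim, Lemma~\ref{lemmma} already delivers a net $(z_\alpha)$ in $X$ converging to $x^{**}/\|x^{**}\|$ in $\sigma(X^{**},X^*)$ and satisfying (\ref{genineq}), but with no a priori control on $\|z_\alpha\|$. The main task is to trade $(z_\alpha)$ for a norm-bounded net without destroying the inequality, and I plan to do this by mimicking the Mazur-style reconciliation already used in the proof of Theorem~\ref{thmmain}.

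For $\|x^{**}\|>1$, I will combine Lemma~\ref{lemmma} with Goldstine's theorem: since $x^{**}/\|x^{**}\|$ is a unit vector in $X^{**}$, there is a net $(u_\beta)\subseteq B_X$ converging to $x^{**}/\|x^{**}\|$ in $\sigma(X^{**},X^*)$. Along the product directed set one then has $z_\alpha-u_\beta\to 0$ in $\sigma(X,X^*)$, and Mazur's theorem yields convex combinations $\tilde z_\gamma\in\conv\{z_\alpha\}$ and $\tilde u_\gamma\in\conv\{u_\beta\}\subseteq B_X$ with $\|\tilde z_\gamma-\tilde u_\gamma\|\to 0$. Convexity of the left-hand side of (\ref{genineq}) in $y_\alpha$ together with the independence of the right-hand side from $y_\alpha$ ensures that $(\tilde z_\gamma)$ still satisfies (\ref{genineq}) uniformly in $\beta\in[0,1]$, and the norm proximity transfers (\ref{genineq}) to $(\tilde u_\gamma)\subseteq B_X$ up to an error absorbed into $\varepsilon$. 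Feeding $(\tilde u_\gamma)$ into the inductive scheme of Theorem~\ref{thmmain}, with compact operators replaced by elements of $B_X$ and essential norms replaced by the quotient distance $d(\,\cdot\,,X)$, will produce scalars $\lambda_i\geq 0$ with $\sum_i\lambda_i=1$ and indices $\gamma(i)$ such that $\sum_i\lambda_i\tilde u_{\gamma(i)}\in B_X$ is a best approximation to $x^{**}$, and along the way yields the distance formula $d(x^{**},B_X)=\max\{\|x^{**}\|-1,d(x^{**},X)\}$.

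For $\|x^{**}\|\leq 1$ I will argue as in Case~2 of Theorem~\ref{thmmain}: proximinality of $M$-embedded spaces in their biduals (see \cite{MR1238713}*{Chapter~III}) together with a Goldstine net $(u_\beta)\subseteq B_X$ with $u_\beta\to x^{**}$ in $\sigma(X^{**},X^*)$, combined with \cite{MR1257062}*{Corollary~3.2} applied in the $M$-ideal setting, will extract a norm-convergent convex combination $x\in\overline{\conv}\{u_\beta\}\subseteq B_X$ with $\|x^{**}-x\|=d(x^{**},X)=d(x^{**},B_X)$. I expect the principal technical hurdle to be the Mazur reconciliation step: one must confirm that the convex combinations can be arranged so that $(\tilde u_\gamma)\subseteq B_X$ inherits (\ref{genineq}) uniformly in $\beta\in[0,1]$ for each fixed $z\in X^{**}$ and $\varepsilon>0$; once this is in place, the inductive scheme of Theorem~\ref{thmmain} transfers to the bidual setting with only notational changes.
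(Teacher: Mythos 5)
Your proposal is correct and follows essentially the same route as the paper's proof: Lemma~\ref{lemmma} applied with $J=X$ in $X^{**}$, a Goldstine net in $B_X$, Mazur-type convex combinations to reconcile the two nets while preserving inequality (\ref{genineq}), the inductive scheme of Theorem~\ref{thmmain} for $\|x^{**}\|>1$, and \cite{MR1257062}*{Corollary~3.2} for $\|x^{**}\|\leq 1$. The ``technical hurdle'' you flag is handled exactly as you anticipate, via convexity of the left-hand side of (\ref{genineq}) in the approximating net.
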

\begin{proof}
Let $x^{**}\in X^{**}$ be such that $\|x^{**}\|> 1$ and $(y_\alpha)$  be the net in $X$ devised from Lemma \ref{lemmma}. Since $B_X$ is weak$^*$-dense in $B_{X^{**}}$, there exists a net $(x_\alpha)$ in $X$ such that $\|x_\alpha\| \leq 1$ and $(x_\alpha)$ converges to $\frac{x^{**}}{\|x^{**}\|}$ in the weak*-topology. Then the net  $( x_\alpha - y_\alpha)$ converges to 0 in the weak*- topology of $X^{**}$ and hence $(x_\alpha - y_\alpha)$ converges to 0 in the weak topology of $X$. Therefore $\|{z_\alpha}-{u_\alpha}\| \to 0$ for some ${z_\alpha}\in  \conv\{x_\beta : \beta\geq \alpha\}$ and ${u_\alpha}\in  \conv\{y_\beta : \beta\geq \alpha\}$. Clearly $\|{z_\alpha}\|\leq 1$. Since $({u_\alpha})$ satisfies inequality~(\ref{genineq}), it follows that the net $({z_\alpha})$  satisfies inequality~(\ref{genineq}). Now by proceeding as in the proof of  of Theorem \ref{thmmain}, we can construct a  sequence $(\lambda_n)$ of non-negative real numbers and a sequence $(z_{\alpha_n})$ from the net $({z_\alpha})$ such that $z=\sum_{n=1}^\infty \lambda_n z_{\alpha_n}$ is a best approximation to $x^{**}$ from $B_X$ and $d(x^{**}, B_X) = \operatorname{max}\{\|x^{**}\|-1,d(x^{**},X) \}=\|x^{**}-z\|$.

If $\|x^{**}\|\leq 1$, then there exists a net $(x_\alpha)$ in $B_X$ such that $(x_\alpha)$ converges to $x^{**}$ in the weak* topology. Since the weak* topology on $X^{**}$ coincide with $\sigma(X^{**}, X^*)$-topology, by the remark following \cite{MR1257062}*{Corollary 3.2}, there exists an element $y\in \overline{\conv}\{x_\alpha\}$ such that  $d(x^{**}, X) = \|x-y\|$. Since $\|x_\alpha\| \leq 1$ for all $\alpha$, we have $y\in B_X$ and  hence  $d(x^{**}, B_X) = d(x^{**},X) = \|x-y\|$. Therefore $X$ is ball proximinal in $X^{**}$.
\end{proof}

\begin{rmk}
 If $X$ is a reflexive space such that $\mathcal{K}(X)$ is an $M$-ideal in $\mathcal{B}(X)$, then, by \cite{MR1238713}*{Chapter~\Rmnum{6}, Proposition~4.11}, $\mathcal{B}(X)$ is isometric to the bidual of $\mathcal{K}(X)$ and hence, by Theorem~\ref{thmmain}, $\mathcal{K}(X)$ is ball proximinal in its bidual.
\end{rmk}

 We now give an example of a Banach space $X$ such that $\mathcal{K}(X)$ is ball proximinal in $\mathcal{B}(X)$, but $\mathcal{K}(X)$ is not an $M$-ideal in $\mathcal{B}(X)$. It is well known that $\mathcal{K}(\ell_1)$ is not an $M$-ideal in $\mathcal{B}(\ell_1)$ (see \cite{MR1238713}). 
	
	\begin{eg}
		$\mathcal{K}(\ell_1)$ is ball proximinal in $\mathcal{B}(\ell_1)$ and  $d(T, B_{\mathcal{K}(\ell_1)})= \operatorname{max}\{\|T\|-1, d(T, {\mathcal{K}(\ell_1)})\}$ for $T\in \mathcal{B}(\ell_1)$.
	\end{eg}
	
\begin{proof}
	We know from \cite{MR0098966}*{Page 220} that every operator $T$ on $\ell_1$ has a matrix representation $[t_{i j}]$ with respect to the canonical basis $\{e_i\}_1^\infty$.

	Let $T=[t_{ij}] \in \mathcal{B}(\ell_1)$. Then, by \cite{MR505751}*{Theorem 1.3}, $d(T, {\mathcal{K}(\ell_1)})= R$, where $R= \lim _{n \rightarrow \infty} \sup _{j} \sum_{i=n}^{\infty}\left|t_{i j}\right|$. 
    Now let $d = \max\{\|T\|-1, R\}$. Clearly, $d\leq d(T, B_{\mathcal{K}(\ell_1)})$. We follow a similar construction as in \cite{MR505751}*{Theorem 1.3} to obtain a compact operator $K = [k_{ij}]$ such that $\|T-K\|=d(T, B_{\mathcal{K}(\ell_1)})= d$ and $\|K\|\leq1$. 
    
    For a  fixed $j\in \mathbb{N}$, the $j^{th}$ column of $K=[k_{ij}]$ is defined as follows:
    
     If $ \sum_{i=1}^{\infty}\left|t_{i j}\right| \leqslant d$, set $k_{i j}=0$ for $i=1, \ldots, \infty$. 
     
      If $ \sum_{i=1}^{\infty}\left|t_{i j}\right|>d,$  let $n$ be the largest index such that  $\sum_{i=n}^\infty|t_{i j}|> d$. Then there exists a real number $a \in [0, 1]$ such that $ a|t_{n j}|+\sum_{i=n+1}^\infty|t_{i j}|  = d$. Now set $k_{i j}$ as:  
      $$
      k_{i j} = 
      \begin{cases}
      {t_{i j}}& \text{if}\quad1\leq i<n,\\
      {(1-a)t_{i j}}&\text{if}\quad i= n,\\
      0 & \text{if} \quad i> n.
      \end{cases}
$$
    Now to prove $\|K\|\leq 1$, we assume $n>1$ (a similar proof works for $n=1$). Then for $j$ with $ \sum_{i=1}^{\infty}\left|t_{i j}\right|>d$, since 
$\sum_{i=1}^{n-1}|t_{i j}|+a|t_{n j}|+(1-a)|t_{n j}|+\sum_{i=n+1}^\infty|t_{i j}|= \sum_{i=1}^\infty|t_{i j}|\leq \|T\|$ and $d \geq \|T\|-1$, we get 
    $$ \sum_{i=1}^{\infty}|k_{i j}|=\sum_{i=1}^{n-1}|t_{i j}|+(1-a)|t_{n j}|\leq \|T\|-d\leq 1.$$
     
 Therefore $\|K\|\leq 1$ and $\|T-K\|=\sup _{j} \sum_{i=1}^{\infty}|t_{ij}-k_{ij}|=d$. Now  a similar argument as in the proof of \cite{MR505751}*{Theorem 1.3} gives the compactness of the operator $K$. Thus $d(T,B_{\mathcal{K}(\ell_1)}) = \|T-K\| = \operatorname{max}\{\|T\|-1, d(T, {\mathcal{K}(\ell_1)}) \}$ and hence $\mathcal{K}(\ell_1)$ is ball proximinal in $\mathcal{B}(\ell_1)$.  
\end{proof}
\section*{Acknowledgements}
The research of the first named author is supported by SERB MATRICS grant(No. MTR/2017/000926) and the research of the second named author is supported by UGC Junior research fellowship (No. 20/12/2015(ii)EU-V).
\begin{bibdiv}
\begin{biblist}

\bib{MR1971228}{article}{
      author={Axler, Sheldon},
      author={Berg, I.~David},
      author={Jewell, Nicholas},
      author={Shields, Allen},
       title={Approximation by compact operators and the space h $\infty$ + c},
        date={1979},
        ISSN={0003486X},
     journal={Annals of Mathematics},
      volume={109},
      number={3},
       pages={601\ndash 612},
         url={http://www.jstor.org/stable/1971228},
}

\bib{MR576869}{article}{
      author={Axler, Sheldon},
      author={Jewell, Nicholas},
      author={Shields, Allen},
       title={The essential norm of an operator and its adjoint},
        date={1980},
        ISSN={0002-9947},
     journal={Trans. Amer. Math. Soc.},
      volume={261},
      number={1},
       pages={159\ndash 167},
         url={https://doi.org/10.2307/1998323},
      review={\MR{576869}},
}

\bib{MR2374712}{incollection}{
      author={Bandyopadhyay, Pradipta},
      author={Lin, Bor-Luh},
      author={Rao, T. S. S. R.~K.},
       title={Ball proximinality in {B}anach spaces},
        date={2007},
   booktitle={Banach spaces and their applications in analysis},
   publisher={Walter de Gruyter, Berlin},
       pages={251\ndash 264},
      review={\MR{2374712}},
}

\bib{MR1121711}{article}{
      author={Behrends, Ehrhard},
       title={On the principle of local reflexivity},
        date={1991},
        ISSN={0039-3223},
     journal={Studia Math.},
      volume={100},
      number={2},
       pages={109\ndash 128},
         url={https://doi.org/10.4064/sm-100-2-109-128},
      review={\MR{1121711}},
}

\bib{MR0493107}{book}{
      author={Blatter, J\"{o}rg},
       title={Grothendieck spaces in approximation theory},
   publisher={American Mathematical Society, Providence, R.I.},
        date={1972},
        note={Memoirs of the American Mathematical Society, No. 120},
      review={\MR{0493107}},
}

\bib{MR1238713}{book}{
      author={Harmand, P.},
      author={Werner, D.},
      author={Werner, W.},
       title={{$M$}-ideals in {B}anach spaces and {B}anach algebras},
      series={Lecture Notes in Mathematics},
   publisher={Springer-Verlag, Berlin},
        date={1993},
      volume={1547},
        ISBN={3-540-56814-X},
         url={https://doi.org/10.1007/BFb0084355},
      review={\MR{1238713}},
}

\bib{MR0296659}{article}{
      author={Holmes, Richard~B.},
      author={Kripke, Bernard~R.},
       title={Best approximation by compact operators},
        date={1971/72},
        ISSN={0022-2518},
     journal={Indiana Univ. Math. J.},
      volume={21},
       pages={255\ndash 263},
         url={https://doi.org/10.1512/iumj.1971.21.21020},
      review={\MR{0296659}},
}

\bib{MR3314889}{article}{
      author={Jayanarayanan, C.~R.},
      author={Paul, Tanmoy},
       title={Strong proximinality and intersection properties of balls in
  {B}anach spaces},
        date={2015},
        ISSN={0022-247X},
     journal={J. Math. Anal. Appl.},
      volume={426},
      number={2},
       pages={1217\ndash 1231},
         url={https://doi.org/10.1016/j.jmaa.2015.01.013},
      review={\MR{3314889}},
}

\bib{MR505751}{article}{
      author={Mach, Jaroslav},
      author={Ward, Joseph~D.},
       title={Approximation by compact operators on certain {B}anach spaces},
        date={1978},
        ISSN={0021-9045},
     journal={J. Approx. Theory},
      volume={23},
      number={3},
       pages={274\ndash 286},
         url={https://doi.org/10.1016/0021-9045(78)90116-8},
      review={\MR{505751}},
}

\bib{MR2146216}{article}{
      author={Saidi, Fathi~B.},
       title={On the proximinality of the unit ball of proximinal subspaces in
  {B}anach spaces: a counterexample},
        date={2005},
        ISSN={0002-9939},
     journal={Proc. Amer. Math. Soc.},
      volume={133},
      number={9},
       pages={2697\ndash 2703},
         url={https://doi.org/10.1090/S0002-9939-05-08152-9},
      review={\MR{2146216}},
}

\bib{MR0257800}{book}{
      author={Schatten, Robert},
       title={Norm ideals of completely continuous operators},
      series={Second printing. Ergebnisse der Mathematik und ihrer
  Grenzgebiete, Band 27},
   publisher={Springer-Verlag, Berlin-New York},
        date={1970},
      review={\MR{0257800}},
}

\bib{MR0098966}{book}{
      author={Taylor, Angus~E.},
       title={Introduction to functional analysis},
   publisher={John Wiley \& Sons, Inc., New York; Chapman \& Hall, Ltd.,
  London},
        date={1958},
      review={\MR{0098966}},
}

\bib{MR1257062}{article}{
      author={Werner, Dirk},
       title={{$M$}-ideals and the ``basic inequality''},
        date={1994},
        ISSN={0021-9045},
     journal={J. Approx. Theory},
      volume={76},
      number={1},
       pages={21\ndash 30},
         url={https://doi.org/10.1006/jath.1994.1002},
      review={\MR{1257062}},
}

\end{biblist}
\end{bibdiv}
\end{document}